\newcommand{\Rlogo}{\protect\includegraphics[height=1.8ex,keepaspectratio]{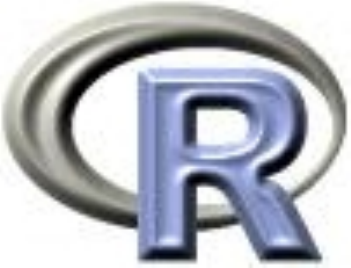}}
\newcommand{\egaldef}{\mathrel{:=}}
\newcommand*{\CF}{\mathcal{F}}
\newcommand*{\Fn}{\CF_n}
\newcommand*{\E}{\mathbb{E}}
\newcommand*{\EE}[1]{\E\left[#1\right]}
\newcommand*{\PP}{\mbox{$\mathbb{P}$}}
\newcommand*{\ECond}[2]{\E\left[#1\middle| #2\right]}
\newcommand*{\EFn}[1]{\ECond{#1}{\CF_n}}
\newcommand*{\R}{\mathbb{R}}
\newcommand*{\xR}{\mathbb{R}}
\newcommand*{\xN}{\mathbb{N}}
\newcommand*{\scal}[1]{\left\langle #1 \right\rangle} 
\newcommand*{\nrm}[1]{\left\| #1 \right\|}            
\newcommand*{\nrmOp}[1]{\nrm{#1}_{\text{op}}}            
\newcommand*{\nrmHS}[1]{\nrm{#1}_{\text{\tiny H.S.}}}            
\newcommand*{\abs}[1]{\left| #1 \right|}              
\newcommand*{\Var}{\mathbf{Var}}  		
\newcommand*{\Cov}{\mathbf{Cov}}                    
\newcommand*{\argmin}{\mathbf{argmin}}
\newcommand*{\ind}[1]{\mathbf{1}_{#1}}
\newcommand{\CL}{\mathcal{L}}
\newcommand{\ep}{\varepsilon} 
\newcommand{\idt}{\mathbf{I}_H}
\newcommand{\Zbar}{\overline{Z}}
\newcommand{\Tbar}{\overline{T}}
\newcommand*{\wass}[1]{\mathcal{W}_2\left(#1\right)}
\newcommand*{\law}{\mathcal{L}}
\newcommand*{\cstHolder}{C_3}
\newcommand*{\cstWass}{C_4}
\newcommand*{\cstMoment}{C_6}
\newcommand*{\assm}[1]{\textbf{A#1}}
\newcommand{\lmin}{\lambda_{min}}
\newcommand{\cG}{c_\gamma}
\newcommand{\covLimite}{\Sigma}
\newcommand{\avgCov}{\overline{\Sigma}}
\newcommand{\limite}[2]{\xrightarrow[#1]{#2}}
\DeclareMathOperator{\cv}{\limite{n \to \infty}{}}
\DeclareMathOperator{\cvp}{\limite{n \to \infty}{P}}
\DeclareMathOperator{\cvl}{\limite{n \to \infty}{\CL}}
\DeclareMathOperator{\trace}{Tr}
\newtheorem{defn}{Definition}
\newtheorem*{notation}{Notation}
\newtheorem{thm}{Theorem}[section]
\newtheorem{lem}[thm]{Lemma}
\newtheorem{prop}[thm]{Proposition}
\newtheorem{rem}{Remark}
\begin{document}

\title{Recursive estimation of the conditional geometric  median in Hilbert spaces}

\author{Herv\'e \textsc{Cardot}, Peggy \textsc{C\'enac}, Pierre-Andr\'e \textsc{Zitt} \\ Institut de Math\'ematiques de Bourgogne, Universit\'e de Bourgogne, \\
9 avenue Alain Savary, 21078 Dijon Cédex, France \\
email: \{Herve.Cardot, Peggy.Cenac,  Pierre-Andre.Zitt\}@u-bourgogne.fr
} 
\maketitle
\begin{abstract}
A recursive estimator of the conditional geometric median in Hilbert spaces is studied. It is based on a stochastic gradient algorithm whose aim is to minimize a weighted $L_1$ criterion and is consequently well adapted for robust online estimation. The weights are controlled by a kernel function and an associated bandwidth. Almost sure convergence and $L^2$ rates of convergence are proved under general conditions on the conditional distribution as well as the sequence of  descent steps of the algorithm and the sequence of bandwidths.
Asymptotic normality is also proved for the averaged version of the algorithm with an optimal rate of convergence. 
A simulation study confirms the interest of this new and fast algorithm when the sample sizes are  large. Finally, the ability of these recursive algorithms to deal with very high-dimensional data is illustrated on the robust estimation of television audience profiles conditional on the total time spent watching television over a period of 24 hours.
\end{abstract}
\noindent \textbf{Keywords}: asymptotic normality, averaging, CLT, kernel regression, Mallows-Wasserstein distance, online data, Robbins-Monro, robust estimator, sequential estimation, Stochastic gradient.

\section{Introduction}

It is not unusual nowadays to get large samples of high-dimensional or functional data together with real covariates that are correlated with the functional variable under study.  The estimation of how  the shape of the functional response may depend on real or functional covariates has been deeply studied in the statistical literature : linear models for functional response have been proposed by \cite{Faraway97}, \cite{CFF02} or \cite{Bosq} (see also \cite{RamsaySilverman2005}) and \cite{GCCR2010} whereas nonlinear relationships are studied in \cite{lecoutre90}, \cite{CMW2004}, \cite{Lian2007}, \cite{Cardot2007}, \cite{Lian2011} and \cite{FLTV2011}.

The main drawback of  all the above mentioned estimators, whose target is the conditional expectation, is that they all rely, explicitly or not, on least squares and are consequently sensitive to outliers. 
In such a context of large samples of high dimensional data, outlying observations, which  may not be uncommon, might be hard to detect with automatic procedures. 
 Directly considering  robust indicators of centrality  such as medians is a way to deal with this issue. If $Y$ be a random variable taking values in a Hilbert space $H,$ its geometric median $m$ (also called spatial median or $L_1$-median,  see \cite{Sma90}
 for a survey) is defined as follows
\begin{align}
m:=\argmin_{\alpha \in H} \EE{\nrm{Y-\alpha} - \nrm{Y}}.
\label{def:Hmed}
\end{align}
The median $m$ is  uniquely defined under simple conditions when the dimension of $H$ is larger than or equal to 2, it has a 0.5 breakdown point (\cite{Kem87}) as well as a bounded gross sensitivity error (\cite{CCZ11}).
When one has a sample at hand, algorithms based on the minimization of the empirical version of risk \eqref{def:Hmed} have been proposed by \cite{VZ00} and properties of such robust estimators can be found in the recent review by \cite{MNO2010}. Nevertheless, these computational techniques may not be able to handle  very large samples of high-dimensional data since they require to store all the data. An  alternative approach, developed by \cite{ChaouchGoga2012} and which can cope with this issue, consists in considering unequal probability sampling techniques in order to select, in a effective way, subsamples with sizes much smaller than the initial sample size. 

\medskip

We suggest in this work another direction based on  recursive techniques   which do not require to store all the data. Another interest of these recursive approaches is that they allow automatic update of the estimators if, for example, the data arrive sequentially. Recently, a simple recursive algorithm which gives  efficient estimates of the geometric median in separable Hilbert spaces has been proposed by \cite{CCZ11}. It is shown that averaged versions of classic stochastic gradient algorithms have a limiting normal distribution that is the same as the distribution of the static estimator based on a direct minimization of the empirical version of risk \eqref{def:Hmed}.

\medskip

In a finite dimension context, \cite{CG2000} and  \cite{ChengDeGooijer2007} proposed to introduce a kernel function $K$ in the empirical version of \eqref{def:Hmed} in order to take covariate effects into account. The kernel weights are controlled by a sequence of bandwidth values  that tends to zero when the sample size increases in order to build consistent estimates of  the conditional geometric median.
With the same ideas of local approximation of the conditional distribution, we study, in this work, a modification of the recursive algorithm suggested in \cite{CCZ11}. It consists in introducing weights, controlled by a kernel function,  in order to build consistent recursive estimators of the conditional geometric median. The response variable is also allowed to take values in a separable Hilbert space. For real response, recursive estimators of the regression function based on kernel weights have been introduced by \cite{Revesz1977}  whereas a deep  study of their asymptotic properties, which also includes averaged estimation procedures, is proposed in \cite{MPS2009}.

\medskip

The paper is organized as follows. In Section 2, we first define the stochastic gradient  recursive estimator as well as its averaged version for the case of a real covariate. Note that our results could be extended  to multidimensional covariates. We state the asymptotic normality, under general conditions, of the averaged algorithm in separable Hilbert spaces, with an optimal rate of convergence.
The regularity hypotheses, which are much weaker than those of \cite{CG2000}, are also expressed in terms of the Wasserstein distance between the conditional distributions.

In Section 3, a comparison of the static approach, which consists in minimizing the empirical version of risk  \eqref{def:Hmed}, with the stochastic gradient estimator and its averaged version is performed on a  simulation study. It confirms the good behavior as well as the stability, with respect to  the descent steps, of the averaged algorithm. The ability of this estimator to deal with large samples of very high-dimensional data is then illustrated on the estimation of television audience profiles given the total time spent watching television. Proofs are gathered in Section 4.

\section{Notations, hypotheses and main results}

Let $(Y,X)$ be a pair of random variables taking values in $H \times \R,$ where $H$ is a Hilbert space whose norm is denoted by $\nrm{\cdot}.$
Suppose that $X$ is continuous, and denote by $p(x)$ its density at $x\in \xR$. For any $x$ in the support of $X$, denote by $\mu_x$ the 
conditional law of $Y$ given $X = x$. 
Consider, for $(\alpha,x) \in H \times \R,$ the following functional
\begin{align}
  \label{eq=defGalpha}
G(\alpha,x) &\egaldef  p(x) \ \EE{\nrm{Y-\alpha} - \nrm{Y} | X=x}.
\end{align}

The geometric median of $Y$ given $X=x$, denoted by $m(x)$, is defined as the solution of the following optimization problem:
\begin{align}
m(x):=\argmin_{\alpha \in H} \ G(\alpha,x) .
\label{def:condHmed}
\end{align}
The solution of \eqref{def:condHmed} is unique provided that the conditional distribution $\mu_x$ is not supported by a straight line (\cite{Kem87}). We suppose
from now on the following assumption.
\begin{itemize}
  \item [\textbf{A1.}]  For every $x$ in the support of the probability density function $p$ of the random variable $X$, 
    $\mu_x$ is not concentrated on a straight line: 
  for all $v\in H,$  there is $w \in H$ such that $\scal{v,w} = 0$ and
  \begin{equation}
    \Var ( \scal{w,Y}|X=x ) > 0.  
    \label{eq:supportCdtn}
  \end{equation}
\end{itemize}

Suppose we have a sequence $(X_n,Y_n)_{n\geq 1}$ of independent copies of $(X,Y)$. 
In the unconditional case where the $X$ variable is not taken into account, one can look for 
the unconditional median, 
\emph{i.e.} the minimum $m$ defined by (\ref{def:Hmed}). Under weak hypotheses, the median is uniquely defined as the zero of the derivative:
\[-\EE{\frac{Y-\alpha}{\nrm{Y-\alpha}}}.\]

 We introduced in \cite{CCZ11} the following recursive estimator of $m$:
\begin{equation}
  Z_{n+1} = Z_n + \gamma_n \frac{Y_{n+1} - Z_n}{\nrm{Y_{n+1} - Z_n}},
  \label{eq=defOldAlgo}
\end{equation}
where $\gamma_n$ was a well-chosen deterministic sequence. In the present case, the law of $Y_n$ is not the  conditional law $\mu_x$, so 
this idea does not work directly. However, it is natural to see $Y_n$ as an approximate sample of $\mu_x$ if $X_n$ happens to be 
very close to $x$. 
Therefore, a simple estimator can be built by introducing weights, through a kernel function $K$, whose properties will be specified later.
We modify \eqref{eq=defOldAlgo} as follows to take the weights into account, and define our recursive estimator of $m(x)$:
\begin{align}
Z_{n+1}(x) &= Z_n(x) + \gamma_n \frac{Y_{n+1} - Z_n(x)}{\nrm{Y_{n+1} - Z_n(x)}} \frac{1}{h_n} K \left( \frac{X_{n+1}-x}{h_n}\right)
\label{def:algo1}
\end{align}
with two deterministic sequences of tuning parameters $h_n$ and $\gamma_n$ whose properties are given below. 

\medskip
For a constant sequence $(h_n)$, this algorithm converges towards the minimum of the modified objective function:
\begin{equation}
G_h(\alpha,x) \egaldef  \EE{ \left(\nrm{Y-\alpha} - \nrm{Y}\right) \frac{1}{h}K \left( \frac{X-x}{h} \right) }.
\label{def:weightHmed}
\end{equation}
The partial derivative of $G_h$ with respect to $\alpha$ is an element of $H$ defined by
\begin{align}
\Phi_h(\alpha) &\egaldef   \nabla_\alpha G_h(\alpha,x) \nonumber \\
 &= - \EE{\frac{ Y - \alpha}{ \nrm{ Y- \alpha}}  \frac{1}{h} K \left( \frac{X-x}{h}\right)}.
\label{def:PhiH}  
\end{align}

We will see in Proposition \ref{prp=PhiEtPhiH} that, under suitable hypotheses, when $h$ goes to zero,  $\Phi_h$ goes to the gradient $\Phi$ of $G$, defined by:
\begin{equation}
\Phi(x,\alpha) =  - p(x) \ECond{\frac{Y - \alpha}{\nrm{Y-\alpha}}}{X=x}.
  \label{eq=defPhi}
\end{equation}

The idea of using a kernel, and of assigning a large weight to $Y_n$ when $X_n$ is close to $x$ can only work if the conditional law $\mu_{x'}$ varies, in some sense, regularly. A natural way of expressing this regularity is through the  Mallows-Wasserstein distance. 
  Let us recall its definition.

\begin{defn}
  Let $\mu$ and $\nu$ be two probability measures on $H$ with finite second order moments. Let $\mathcal{C}$ be the
  set of couplings of $\mu$ and $\nu$, \emph{i.e.} the set of measures $\pi$ on $H\times H$
  whose first marginal is $\mu$ and whose second marginal is $\nu$. 

  The Wasserstein distance between $\mu$ and $\nu$ is given by:
  \[
    \wass{\mu,\nu} = \left( \inf_{\pi \in \mathcal{C}} \int \nrm{x-y}^2 d\pi(x,y) \right)^{1/2}.
    \]
\end{defn}

We may now state our assumptions. 
\begin{itemize}
  \item [\textbf{A2.}] The probability density function $p$ of the random variable $X$ is bounded and satisfies a uniform H\"older condition : 
there are two constants $\beta>0$  and $C_2>0$ such that
  \begin{align*}
	     \forall (x,x')\in \xR^2,  \quad
     |p(x) - p(x')|  \leq C_2 |x-x'|^{\beta}.
   \end{align*}

We denote by $p_{\max} = \sup_{x \in \R} p(x).$ 

     \item [\textbf{A3.}] The gradient $\Phi(x,\alpha)$ defined by \eqref{eq=defPhi}
  satisfies a uniform Hölder condition with coefficient $\beta.$ There is $\cstHolder>0$ such that
  \begin{equation}
    \label{eq=A3}
	     \forall (x,x')\in \xR^2, \forall \alpha \in H, \quad
     \nrm{\Phi(\alpha,x) - \Phi(\alpha,x')} \leq \cstHolder |x-x'|^{\beta}.
   \end{equation}
 
   \item [\textbf{A4.}] The conditional law $\mu_x = \law(Y|X = x)$ varies regularly with $x$: there are two constants 
     $\cstWass$ and $\beta$ such that 
     \begin{equation}
       \label{eq=assumptionWass}
       \wass{\mu_x, \mu_{x'}} \leq \cstWass\abs{x-x'}^\beta.
     \end{equation}
   \item[\textbf{A5.}] The kernel function $K$ is positive, 
   bounded with compact support and satisfies
  \begin{align*}
\int_\R K(u) du &= 1. 
\end{align*}
  \item [\textbf{A6.}] 
   There is a constant $\cstMoment$ such that:
       \begin{equation}
     \forall \alpha \in H, \forall x, \quad
     \EE{\nrm{Y-\alpha}^{-2} | X=x} \leq \cstMoment. 
    \label{eq=boundednessOnBall}
  \end{equation}
\end{itemize}
\begin{rem}

  Without loss of generality, we suppose that the constant $\beta$ in \assm{2}, \assm{3} and \assm{4} has always the same value. 

  Assumption \assm{3} is a regularity assumption that is required to control the approximation error and to prove the convergence of the algorithm.
  Assumption \assm{4}
  seems to be more natural, and we prove in section \ref{sec=wasserstein} that, together with \assm{6}, it implies \assm{3}.

  Hypotheses \assm{2} and \assm{5} are classical in nonparametric estimation and could be weakened at the expense of more complicated proofs. For classical properties of kernel estimators under general hypotheses, see for example \cite{WandJones1995}.
  
  Similarly, Hypothesis \assm{6} is stated quite strongly here, in order to avoid additional technicalities in the proof of the asymptotic normality if the averaged algorithm. 
  See \cite{CCZ11} for a relaxed version, under which the same results should hold. Informally 
  it forces the law to be ``spread out'' and this avoids pathological behaviors of the algorithm. 
\end{rem}

We have three main results. The first one states  the almost sure convergence of the algorithm. 

\begin{thm} Under assumptions \assm{1}--\assm{3} and \assm{5},
and if $\sum_n \gamma_n=\infty$, $\sum_n \gamma_n^2 h_n^{-1} < \infty$ as well as $\sum_n \gamma_n h_n^{\beta} < \infty,$ then, for all $x$ such that $p(x)>0,$
\begin{align*}
\lim_{n \rightarrow \infty} \nrm{Z_n(x) - m(x)} &= 0 \quad a.s.
\end{align*}
\label{thm:cvps}
\end{thm}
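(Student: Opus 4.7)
The plan is to cast \eqref{def:algo1} in the classical Robbins--Monro framework, treating the bandwidth as a source of vanishing bias. Let $\Fn = \sigma((X_i,Y_i):i\leq n)$ and rewrite the recursion as $Z_{n+1}(x) = Z_n(x) + \gamma_n U_{n+1}$ with
\[
U_{n+1} \egaldef \frac{Y_{n+1} - Z_n(x)}{\nrm{Y_{n+1} - Z_n(x)}}\,\frac{1}{h_n} K\!\left(\frac{X_{n+1}-x}{h_n}\right).
\]
A direct conditioning argument gives $\EFn{U_{n+1}} = -\Phi_{h_n}(Z_n(x))$. Conditioning on $X$, changing variables, and applying \assm{3} yield the uniform bias bound
\[
\sup_{\alpha \in H}\nrm{\Phi_{h_n}(\alpha) - \Phi(\alpha,x)} \;\leq\; \cstHolder\, h_n^{\beta}\int \abs{u}^{\beta} K(u)\,du \;\leq\; C\, h_n^{\beta},
\]
where the last inequality uses the compact support of $K$ (\assm{5}). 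For the noise factor, since the radial term in $U_{n+1}$ is a unit vector, $\nrm{U_{n+1}}^2 \leq h_n^{-2} K^2((X_{n+1}-x)/h_n)$, and integrating against the density $p$ (bounded by $p_{\max}$ by \assm{2}) gives $\EFn{\nrm{U_{n+1}}^2} \leq C/h_n$.

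Next I would perform the Lyapunov computation for $V_n \egaldef \nrm{Z_n(x)-m(x)}^2$. Expanding the square and conditioning,
\[
\EFn{V_{n+1}} = V_n - 2\gamma_n \scal{Z_n(x)-m(x),\,\Phi_{h_n}(Z_n(x))} + \gamma_n^2 \EFn{\nrm{U_{n+1}}^2}.
\]
Splitting $\Phi_{h_n}(Z_n(x)) = \Phi(Z_n(x),x) + r_n$ with $\nrm{r_n}\leq C h_n^{\beta}$ and using the elementary inequality $2\nrm{Z_n(x)-m(x)}\cdot C h_n^{\beta} \leq h_n^{\beta} V_n + C^2 h_n^{\beta}$ on the cross term, I obtain
\[
\EFn{V_{n+1}} \leq (1+\gamma_n h_n^{\beta})\,V_n - 2\gamma_n \scal{Z_n(x)-m(x),\,\Phi(Z_n(x),x)} + C\bigl(\gamma_n h_n^{\beta} + \gamma_n^2/h_n\bigr).
\]
Since $G(\cdot,x)$ is convex with minimizer $m(x)$, the drift $\scal{Z_n(x)-m(x),\Phi(Z_n(x),x)}$ is nonnegative. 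The hypotheses $\sum \gamma_n h_n^{\beta} < \infty$ and $\sum \gamma_n^2/h_n < \infty$ then bring the Robbins--Siegmund almost supermartingale lemma into play, yielding both the almost sure convergence of $V_n$ to a finite limit $V_\infty \geq 0$ and the almost sure summability $\sum_n \gamma_n \scal{Z_n(x)-m(x),\Phi(Z_n(x),x)} < \infty$.

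To conclude I would argue that $V_\infty = 0$ almost surely by contradiction. On the event $\{V_\infty > 0\}$, the iterates $Z_n(x)$ remain bounded and stay at distance at least some $\delta>0$ from $m(x)$ for all large $n$. Assumption \assm{1} makes $G(\cdot,x)$ strictly convex with unique minimum $m(x)$, so $\scal{\alpha-m(x),\Phi(\alpha,x)}>0$ for every $\alpha \neq m(x)$; by continuity of the drift in $\alpha$ (justified on bounded sets via \assm{6}) and compactness of the annulus $\{\delta \leq \nrm{\alpha-m(x)}\leq R\}$, this drift admits a strictly positive lower bound $c_\delta$ along the trajectory. Combined with $\sum \gamma_n = \infty$, this contradicts the summability obtained above, forcing $V_\infty = 0$ almost surely.

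The main obstacle is the last step: the Robbins--Siegmund part is routine once the bias and noise have been controlled, but extracting the uniform positive lower bound on the drift on compact sets avoiding $m(x)$ is where \assm{1} must really be used, via strict convexity of $G(\cdot,x)$ and continuity of $\Phi(\cdot,x)$. A secondary point requiring care is the calibration of the Young inequality so that both the multiplicative perturbation $\gamma_n h_n^{\beta}$ and the additive error $\gamma_n h_n^{\beta} + \gamma_n^2/h_n$ fall exactly inside the summability class allowed by the hypotheses.
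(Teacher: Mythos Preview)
Your approach is essentially the same as the paper's: decompose the step into the true gradient $\Phi$, a bias $D_{h_n}=\Phi_{h_n}-\Phi$ controlled by \assm{3} and \assm{5} as $O(h_n^\beta)$, and a martingale difference whose conditional second moment is $O(h_n^{-1})$; run the Lyapunov computation for $V_n=\nrm{Z_n-m}^2$; apply Robbins--Siegmund to get $V_n\to V_\infty$ a.s.\ and $\sum_n \gamma_n\scal{Z_n-m,\Phi(Z_n)}<\infty$; and finally use $\sum_n\gamma_n=\infty$ together with strict convexity from \assm{1} to force $V_\infty=0$. The paper carries out exactly this scheme, with only cosmetic differences in how the square is expanded (it keeps $\Phi$, $D_{h_n}$ and $\xi_{n+1}$ separate from the start and uses $\nrm{Z_n-m}\leq 1+V_n$ rather than your Young-type splitting).

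There is, however, a genuine gap in your concluding step, and you have correctly sensed that this is where the difficulty lies. Two points:
\begin{itemize}
\item You invoke \assm{6} to obtain continuity of $\Phi(\cdot,x)$, but \assm{6} is \emph{not} among the hypotheses of this theorem (only \assm{1}--\assm{3} and \assm{5} are available). Continuity of the drift must therefore be argued differently, e.g.\ by working with $G(\cdot,x)$ itself, which is always continuous (even $1$-Lipschitz) and satisfies $\scal{\alpha-m,\Phi(\alpha)}\geq G(\alpha,x)-G(m,x)$ by convexity.
\item More seriously, the annulus $\{\delta\leq\nrm{\alpha-m}\leq R\}$ is \emph{not compact} when $H$ is infinite-dimensional, so your ``continuity plus compactness'' argument for the uniform positive lower bound on the drift does not go through as written. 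One needs an argument that does not rely on compactness---for instance, exploiting the specific convex structure of $G(\cdot,x)$ to show directly that $\inf\{G(\alpha,x)-G(m,x):\delta\leq\nrm{\alpha-m}\leq R\}>0$ under \assm{1}.
\end{itemize}
The paper itself does not spell this step out either; it simply writes ``by the same argument as in \cite{CCZ11}''. So the honest resolution is either to cite that argument, or to supply one that is genuinely valid in a separable Hilbert space without \assm{6}.
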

\begin{rem}
In the following, for simplicity, we choose the step size and window size as inverse powers of $n$:
\begin{align}
  \gamma_n &= \frac{c_\gamma}{n^\gamma}, & 
  h_n      &= \frac{c_h}{n^h}. 
  \label{eq=stepSize}
\end{align}
With these choices the assumptions on the step sizes are:
\begin{align}
  \gamma &\leq 1,    &
  2 \gamma - h &>1,  &
  \gamma + \beta h &> 1.
  \label{eq=contraintesHBetaGamma}
\end{align}
\end{rem}

The assumptions on $h$ and $\gamma$ are always satisfied if we choose $\gamma = 1$ and $h<1$. 
However, as shown in the simulation study, the performances of  algorithm \eqref{def:algo1} strongly depend on the choice 
of the steps $\gamma_n$ and particularly on the constant $c_\gamma$. 
Therefore, we also introduce the following averaged algorithm which is less sensitive to the choice of the step sizes $\gamma_n$ and has nice convergence properties,

\begin{align}
  \overline{Z}_{n+1}(x) &= \frac{1}{n} \sum_{k=1}^n Z_k(x).
  \label{def:averaged}
\end{align}
Our main result is a central limit theorem on this averaged algorithm. 
To adapt the proof of the corresponding CLT from \cite{CCZ11}, we need a good \emph{a priori} bound on the error $Z_n(x) - m$. 

\begin{prop}
  \label{prp=vitesseQuadratique}
  Suppose that $x$ is such that $p(x)>0$ and that $\gamma\leq 1$, $2\gamma-h>1$, $\gamma+\beta h>1$, 
  and $h(1+2 \beta)\geq \gamma.$ 
  Under Assumptions \assm{1}--\assm{3} and \assm{5},
  there exist an increasing sequence of events $(\Omega_N)_{N\in\xN}$, and constants $C_N$, such that $\Omega = \bigcup_{N\in\xN} \Omega_N$, and
  \begin{equation}
    \forall N, \quad 
    \EE{\ind{\Omega_N} \nrm{Z_n - m(x)}^2}
    \leq C_N \frac{\ln(n)}{n^{\gamma-h}} . \nonumber
  \end{equation}
\end{prop}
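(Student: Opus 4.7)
The plan is a Lyapunov analysis of the squared error $\nrm{W_n}^2$ with $W_n = Z_n(x)-m(x)$, coupled with a localization argument that is needed because the drift $\Phi(x,\cdot)$ is only locally strongly monotone around $m(x)$. First I would rewrite \eqref{def:algo1} as $W_{n+1} = W_n - \gamma_n \Phi_{h_n}(Z_n) + \gamma_n \xi_{n+1}$, where $\xi_{n+1}$ is the centered stochastic increment (a conditional martingale difference with respect to $\Fn$). Expanding $\nrm{W_{n+1}}^2$ and conditioning gives
\[
\EFn{\nrm{W_{n+1}}^2} = \nrm{W_n}^2 - 2\gamma_n \scal{W_n, \Phi_{h_n}(Z_n)} + \gamma_n^2 \EFn{\nrm{V_{n+1}}^2},
\]
where $V_{n+1}$ denotes the stochastic step itself. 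Since the direction vector in \eqref{def:algo1} has unit norm, the change of variable $v = (X_{n+1}-x)/h_n$ together with \assm{2} and \assm{5} yields $\EFn{\nrm{V_{n+1}}^2} \le p_{\max}\nrm{K}_\infty (\int K)/h_n$, so the variance term is $\Oh{\gamma_n^2/h_n}$.

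For the drift term I would split $\Phi_{h_n}(Z_n) = \Phi(x,Z_n) + R_n$. The bias $R_n$ is precisely what Proposition~\ref{prp=PhiEtPhiH} controls: writing $\Phi_{h_n}(\alpha) = \int \Phi(x+h_n v,\alpha) K(v)\,dv$ by a change of variable and using the Hölder condition \eqref{eq=A3} gives $\nrm{R_n}\le \cstHolder h_n^\beta \int |v|^\beta K(v)\,dv$ uniformly in $Z_n$. A weighted Cauchy--Schwarz inequality then yields $|\scal{W_n, R_n}| \le \tfrac{\epsilon}{2}\nrm{W_n}^2 + \Oh{h_n^{2\beta}/\epsilon}$ for any $\epsilon>0$. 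For the principal part $\scal{W_n, \Phi(x,Z_n)}$ I would invoke \assm{1}: the Hessian $\Hess G(\cdot,x)|_{m(x)}$ is a positive operator with smallest eigenvalue $\lmin(x)>0$, and an argument analogous to the one used in \cite{CCZ11} (continuity of the Hessian together with the convexity and coercivity of $G(\cdot,x)$) produces, for every $A>0$, a constant $\mu_A>0$ such that
\[
\scal{\Phi(x,\alpha), \alpha-m(x)} \ge \mu_A \nrm{\alpha-m(x)}^2 \quad\text{for all }\alpha\in\mathcal{B}(m(x),A).
\]

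To turn this \emph{local} statement into a closed recursion I would localize via the stopping times $\tau_N := \inf\{n : \nrm{W_n} > A_N\}$ for an increasing sequence $A_N\to\infty$, and set $\Omega_N := \{\tau_N=\infty\}$. Theorem~\ref{thm:cvps} gives $W_n\to 0$ almost surely, hence $(W_n)_n$ is a.s.\ bounded and $\Omega = \bigcup_N \Omega_N$ up to a null set. On the $\Fn$-measurable event $\{\tau_N>n\}$ one has $Z_n\in\mathcal{B}(m(x),A_N)$ and the monotonicity bound applies with $\mu=\mu_{A_N}$. Choosing $\epsilon<2\mu_{A_N}$ in the previous Young step to absorb the bias cross-term, and setting $u_n := \EE{\ind{\tau_N>n-1}\nrm{W_n}^2}$, one obtains the closed recursion
\[
u_{n+1} \le (1-\mu'\gamma_n)\, u_n + C_N\bigl(\gamma_n h_n^{2\beta} + \gamma_n^2/h_n\bigr),
\]
with $\mu'>0$ depending on $N$ through $\mu_{A_N}$.

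Under the extra constraint $h(1+2\beta)\ge\gamma$ one has $h_n^{2\beta}=\Oh{\gamma_n/h_n}$, so the forcing term reduces to $\Oh{\gamma_n\cdot n^{-(\gamma-h)}}$. A standard discrete Gronwall estimate applied to $u_{n+1}\le (1-\mu'\gamma_n)u_n + \gamma_n b_n$ with $\gamma\le 1$ and $b_n=\Oh{n^{-(\gamma-h)}}$ then gives $u_n = \Oh{\ln(n)/n^{\gamma-h}}$, the logarithm arising in the critical case $\gamma=1$ where the product $\mu' c_\gamma$ may be too small to avoid it. Since $\ind{\Omega_N}\le \ind{\tau_N>n-1}$, this bound on $u_n$ delivers the announced estimate. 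The main obstacle I anticipate is the localization step: producing the $\Fn$-measurable approximation $\{\tau_N>n\}$ of the non-adapted event $\Omega_N$ in a way that keeps the recursion closed, verifying the local strong monotonicity on balls of arbitrary finite radius in the Hilbert setting, and tracking how the constants $\mu_{A_N}$ and $C_N$ depend on $N$ in the Gronwall argument.
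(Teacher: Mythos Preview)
Your Lyapunov approach is genuinely different from the paper's route and worth comparing. The paper does not derive a contraction inequality for $u_n = \EE{\ind{\Omega_N}\nrm{Z_n-m}^2}$. Instead it linearizes $\Phi(Z_n) = \Gamma(Z_n-m) + \delta_n$ around $m$, where $\Gamma$ is the Hessian of $G(\cdot,x)$ at $m$, and uses the spectral decomposition of $\Gamma$ to obtain $Z_n-m = \beta_{n-1}(Z_1-m) + \beta_{n-1}M_n - \beta_{n-1}R_{n-1} - \beta_{n-1}R'_{n-1}$ with $\beta_n=\prod_{k\le n}(\idt-\gamma_k\Gamma)$, a martingale part $M_n$, a linearization remainder $R_n$ built from the $\delta_k$, and a kernel-bias remainder $R'_n$ built from the $D_{h_k}(Z_k)$. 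Each piece is bounded separately: the martingale via its bracket and the decay of $\beta_{n-1}\beta_k^{-1}$; the remainder $R_n$ via the quadratic estimate $\nrm{\delta_k}\le C_r\nrm{Z_k-m}^2$, valid only when $Z_k$ is close to $m$. Accordingly the paper's events $\Omega_N$ encode that $Z_k$ stays within a \emph{fixed small} radius $1/K$ of $m$ for all large $k$ (together with $\sup_k\nrm{\delta_k}\le N$), not that the whole trajectory stays in a large ball of radius $A_N$.

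This is exactly where your scheme has a gap, and you correctly flag it as the obstacle. Your recursion needs $\scal{\Phi(x,\alpha),\alpha-m}\ge \mu_{A_N}\nrm{\alpha-m}^2$ on the whole ball of radius $A_N$, with $A_N\to\infty$. Under \assm{1}--\assm{3} and \assm{5} alone this is not available in an infinite-dimensional $H$: strict convexity of $G(\cdot,x)$ gives $\scal{\Phi(\alpha),\alpha-m}>0$ for each $\alpha\neq m$, but the infimum of $\scal{\Phi(\alpha),\alpha-m}/\nrm{\alpha-m}^2$ over a non-compact ball may be zero, and without \assm{6} the Hessian $\Gamma_\alpha$ need not even be defined away from $m$. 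The paper sidesteps this by needing only the single operator $\Gamma$ at $m$ (whose smallest eigenvalue $\lmin>0$ under \assm{1}) and the local second-order bound on $\delta_k$ near $m$; a crude bound $\nrm{\delta_k}\le N$ handles the finitely many early indices. Your argument can be repaired in the same spirit: replace $A_N\to\infty$ by a fixed small radius $\epsilon_0$ on which the local strong monotonicity does follow from $\lmin(\Gamma)>0$, set $\Omega_N=\{\forall k\ge N,\ \nrm{W_k}\le\epsilon_0\}\cap\{\sup_k\nrm{W_k}\le N\}$, and start the Gronwall recursion at $n=N$ with initial value $u_N\le N^2$; the transient $N^2\prod_{k\ge N}(1-\mu\gamma_k)$ is then absorbed into $C_N$. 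What your route buys is that no spectral calculus is needed and the bookkeeping is lighter; what the paper's decomposition buys is a clean separation of the four error sources that is reused verbatim in the proof of Theorem~\ref{thm=CLTavg}.
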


This proposition tells us that, up to a logarithmic factor, the optimal rates of convergence in nonparametric estimation can be attained for well chosen values of the parameter $\gamma$ and $h$. If $\gamma=1$ and $h=(1+2\beta)^{-1},$ then,
\begin{align}
\nrm{Z_n - m(x)}^2 &= O_p \left( \ln(n) \ n^{-2\beta/(2\beta +1)} \right).
\end{align}
Finally our main result is the following central limit theorem for the averaged algorithm. 

\begin{thm}
  \label{thm=CLTavg}
  Assume  \assm{1}, \assm{2} and  \assm{4}--\assm{6}. 
   Let $x$
  satisfy $p(x)>0$. 
  If $\gamma<1$, $2\gamma-h>1$, $\gamma+\beta h>1$ and $h>(2\beta+1)^{-1}$, then:
\[
\frac{n}{\sqrt{\sum_{k=1}^n\frac{1}{h_k}}}\left( \Zbar_n - m(x) \right)
  \cvl
  \mathcal{N} \left(0, \Gamma^{-1}\covLimite \Gamma^{-1}\right),
\]
where
\begin{align}
\label{eq=defCovLimite}
\covLimite 
&=
p(x) \left(\int K^2(u)du \right) \EE{\frac{(Y-m(x))\otimes(Y-m(x))}{\nrm{Y-m(x)}^2} \middle| X = x}, \\
\Gamma &= \EE{\frac{1}{\nrm{Y - m(x)}}\left( \idt - \frac{ (Y - m(x)) \otimes (Y - m(x))}{\nrm{Y - m(x)}^2} \right) \middle| X = x}.
\end{align}
 \end{thm}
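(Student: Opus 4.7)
My plan is to adapt the CLT machinery developed for the unconditional algorithm in \cite{CCZ11}, the main novelty being the kernel scale $1/h_n$ inside every martingale/variance computation, and the bias contribution $\Phi_{h_n}(m(x))$ which is no longer zero.

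First, I rewrite the recursion around $m(x)$. Setting $\delta_n = Z_n(x) - m(x)$ and
$W_{n+1} = \frac{Y_{n+1}-Z_n}{\nrm{Y_{n+1}-Z_n}}\,\frac{1}{h_n}K\!\bigl(\tfrac{X_{n+1}-x}{h_n}\bigr)$,
the martingale increment $\epsilon_{n+1} = W_{n+1} - \EFn{W_{n+1}} = W_{n+1}+\Phi_{h_n}(Z_n)$ gives
\[
\delta_{n+1} = \delta_n - \gamma_n\,\Gamma_{h_n}\delta_n - \gamma_n\,\Phi_{h_n}(m(x)) - \gamma_n R_n + \gamma_n\,\epsilon_{n+1},
\]
where $\Gamma_{h_n} = \nabla_\alpha\Phi_{h_n}(m(x))$ is the Hessian of $G_{h_n}(\cdot,x)$ at $m(x)$, and $R_n$ is a second-order Taylor remainder. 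The usual unconditional calculation (as in \cite{CCZ11}, Lemma on the Hessian) together with the change of variables $u=x+h_nv$ in the definition of $G_{h_n}$ shows $\Gamma_{h_n} \to p(x)\Gamma$ at rate $O(h_n^\beta)$ under \assm{2}--\assm{4}, and likewise $\Phi_{h_n}(m(x)) = O(h_n^\beta)$ since $\Phi(m(x),x)=0$.

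Second, I solve algebraically for $\delta_n$, sum, and average. Multiplying the expansion by $(p(x)\Gamma)^{-1}/\gamma_n$ on both sides and rearranging produces
\[
p(x)\,\Gamma\,\delta_n = \frac{\delta_n-\delta_{n+1}}{\gamma_n} + \epsilon_{n+1} - \Phi_{h_n}(m(x)) - R_n - \bigl(\Gamma_{h_n}-p(x)\Gamma\bigr)\delta_n.
\]
Summing and dividing by $n$ yields
\[
p(x)\,\Gamma\,\Zbar_n^{(m)} \;=\; \frac{M_n}{n} + T_n^{\text{tel}} + T_n^{\text{bias}} + T_n^{\text{rem}} + T_n^{\text{Hess}},
\]
where $M_n=\sum_{k\le n}\epsilon_{k+1}$, $\Zbar_n^{(m)} = \Zbar_n - m(x)$, and the four remainder terms correspond to the telescoping Abel sum, bias, Taylor remainder, and Hessian fluctuation. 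I then multiply by $n/\sqrt{\sum_{k\le n}h_k^{-1}}$ and show the four remainders vanish while $M_n/\sqrt{\sum h_k^{-1}}$ satisfies a CLT with limit covariance $\Sigma$.

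Third, the martingale CLT for $M_n$ in the Hilbert space $H$ requires two ingredients: (a) the quadratic variation $\sum_{k\le n}\EFn{\epsilon_{k+1}\otimes\epsilon_{k+1}}$ should satisfy, after normalisation by $\sum h_k^{-1}$, a convergence to the nuclear operator $\Sigma$; and (b) a Lindeberg-type negligibility condition. For (a), the change of variables $v=(u-x)/h_k$ in
$\EFn{\frac{(Y_{k+1}-Z_k)\otimes(Y_{k+1}-Z_k)}{\nrm{Y_{k+1}-Z_k}^2}\,\frac{K^2((X_{k+1}-x)/h_k)}{h_k^2}}$
together with Proposition~\ref{prp=vitesseQuadratique}, \assm{4} (to move the conditional law from $\mu_{x+h_kv}$ to $\mu_x$ in Wasserstein), \assm{2} (to replace $p(x+h_kv)$ by $p(x)$) and \assm{6} (to handle the singular factor $\nrm{Y-Z_k}^{-2}$ uniformly) gives the limit $\Sigma/h_k$; for (b) I bound $\EFn{\nrm{\epsilon_{k+1}}^4}$ using the compact support of $K$ and \assm{6} to obtain the required truncation estimate.

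Fourth, the four remainder terms are controlled using Proposition~\ref{prp=vitesseQuadratique}. On each $\Omega_N$, $\E[\ind{\Omega_N}\nrm{\delta_n}^2] \le C_N\ln(n)/n^{\gamma-h}$, so (i) $T_n^{\text{Hess}}$ is of order $\frac{1}{n}\sum h_k^\beta \nrm{\delta_k}$; (ii) $T_n^{\text{bias}}$ is of order $\frac{1}{n}\sum h_k^\beta$; (iii) $T_n^{\text{rem}}$ is of order $\frac{1}{n}\sum \nrm{\delta_k}^2/h_k$ (the second-order term for $\alpha\mapsto(Y-\alpha)/\nrm{Y-\alpha}$ weighted by the kernel contributes a $1/h_k$); (iv) $T_n^{\text{tel}}$ is handled by Abel summation and the same $\nrm{\delta_k}$ bounds, with $\delta_{n+1}/\gamma_n$ kept small because $\gamma<1$. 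Under the parameter constraints $\gamma<1$, $2\gamma-h>1$, $\gamma+\beta h>1$ and $h>(2\beta+1)^{-1}$, elementary computation shows each of these four terms is $o(\sqrt{\sum h_k^{-1}}/n)$ on each $\Omega_N$. Letting $N\to\infty$ (since $\PP(\Omega_N)\to 1$) and inverting $p(x)\Gamma$ then yields the stated CLT.

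The main obstacle I expect is obtaining a sharp enough bound on the Taylor remainder $R_n$: because $\Phi_{h_n}$ is a kernel-weighted average of the singular functional $\alpha\mapsto(Y-\alpha)/\nrm{Y-\alpha}$, the second-order term naturally involves $\EE{\nrm{Y-\alpha}^{-2}\cdot h_n^{-1}K((X-x)/h_n)}\nrm{\delta_n}^2$, which is of order $\nrm{\delta_n}^2$ (with $h_n^{-1}$ already consumed by the kernel normalisation, and \assm{6} absorbing the singularity). Getting this estimate uniformly, and matching it cleanly with the $\ln(n)/n^{\gamma-h}$ rate so that the averaged remainder is negligible compared with the martingale fluctuation scale $\sqrt{\sum h_k^{-1}}/n$, is the technical heart of the proof.
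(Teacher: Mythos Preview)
Your proposal is correct and follows essentially the same Polyak--Ruppert strategy as the paper: rearrange the recursion so that $\Gamma(Z_k-m)$ equals a telescoping term plus a martingale increment plus lower-order pieces, sum, normalise, apply a Hilbert-space martingale CLT, and kill the remainders via Proposition~\ref{prp=vitesseQuadratique}. The only organisational difference is that the paper writes $\Phi_{h_n}(Z_n)=\Phi(Z_n)+D_{h_n}(Z_n)$ and then linearises the \emph{limiting} gradient $\Phi$ at $m$ (so the error pieces are $D_{h_k}(Z_k)$ and $\delta_k=\Phi(Z_k)-\Gamma(Z_k-m)$), whereas you Taylor-expand $\Phi_{h_n}$ directly at $m$ and carry $\Phi_{h_n}(m)$, $\Gamma_{h_n}-p(x)\Gamma$, and the second-order remainder $R_n$ separately; these are the same error budget regrouped.

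One point to tighten: in item (iii) you say $T_n^{\text{rem}}$ is of order $\tfrac{1}{n}\sum_k\nrm{\delta_k}^2/h_k$, but in your final paragraph you (correctly) argue that the kernel normalisation absorbs the $1/h_k$, leaving a bound $C\nrm{\delta_k}^2$ via \assm{6}. The latter is what you need --- with the extra $1/h_k$ the constraint $2\gamma-h>1$ would no longer suffice. The paper avoids this ambiguity because its nonlinearity term $\delta_k$ is built from the limiting $\Phi$, for which the $\nrm{Z_k-m}^2$ bound is immediate without any bandwidth factor.
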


As  shown in \cite{CCZ11} in the unconditional framework, the operator $\Gamma$ has a bounded inverse under assumption \assm{1}, so that the asymptotic variance operator is well defined.
Let us also remark that with our assumptions on the sequence of bandwidths, we have
\begin{equation}
\label{equiv}
\sum_{k=1}^n \frac{1}{h_k}= \frac{n}{h_n}.\frac{1}{1+h}+o\left(nh_n^{-1}\right).
\end{equation}
Consequently, the rate of convergence in the CLT is of order $\sqrt{nh_n},$ which is the usual rate of convergence in distribution for nonparametric regression, provided that the bias term is negligible compared to the variance. This latter condition is  ensured by the  additional condition $h < (2 \beta +1)^{-1}$ and we have, with Theorem  \ref{thm=CLTavg},
\[
\sqrt{n h_n}\left( \Zbar_n - m(x) \right)
  \cvl
  \mathcal{N} \left(0, \frac{1}{1+h} \Gamma^{-1}\covLimite \Gamma^{-1}\right).
\]
As in the real regression case (see  \cite{MPS2009}) it turns out that the averaged estimator  has a smaller asymptotic variance, with in our case  a factor $(1+h)^{-1},$ than the classical kernel estimator which minimizes the empirical version of risk \eqref{def:weightHmed}.

\begin{figure}
\begin{tikzpicture}[scale = 5,line label/.style={near start,sloped,above,font=\small}]
  \draw[help lines,->] (0,0) -- (1.2,0) node [below right,black] {$\gamma$};
  \draw[help lines,->] (0,0) -- (0,1.2) node [above left,black]  {$h$}     ;
  \draw[help lines] (0,1) -- (1,1) -- (1,0);
  \draw[name path=montant,very thin]    (0,0) -- node[line label] {$h(1+2\beta)\geq \gamma$} (1,0.33) node[coordinate,label=right:{$1/3$}] (d) {D};
  \draw[name path=constant,very thin]   (0,0.33) -- node[line label] {$h(1+2\beta) \geq 1$} (1,0.33);
  \draw[name path=descendant,very thin] (0,1) node[left] {$1$} -- node[line label] {$\gamma + \beta h >1$} (1,0) node [below] {$1$};
  \draw[name path=supermontant, very thin] (0.5,0) -- node[line label, near end] {$2\gamma - h > 1$} (1,1) node[coordinate] (c) {C};
  \draw[name intersections={of=montant and descendant, by= {a}}];
  \draw[name intersections={of=supermontant and descendant, by= {b}}];
  \fill[fill=black!10] (1,0) -- (b) -- (c) -- (d) -- cycle;
  \fill[fill=black!35] (a) -- (b) -- (c) -- (d) -- cycle;
  \fill[fill=black!55] (b)  --(c) --(d)  --cycle;
  \draw [dashed, thick] (b) -- (c) -- (1,0)--cycle;
  \draw [dashed]  (b)--(1,0);
  \draw (a)--(d);
  \draw[help lines] (b) |- node[black,midway,below] {$2/3$} (0,0) ;
  \node[rectangle,draw] at (0.5,1.2) {$\beta = 1$};
\end{tikzpicture}
\hfill
\begin{tikzpicture}[scale = 5,line label/.style={near start,sloped,above,font=\small}]
  \draw[help lines,->] (0,0) -- (1.2,0) node [below right,black] {$\gamma$};
  \draw[help lines,->] (0,0) -- (0,1.2) node [above left,black]  {$h$}     ;
  \draw[help lines] (0,1) -- (1,1) -- (1,0);
  \draw[name path=montant,very thin] (0,0) -- node[line label] {$h(1+2\beta)\geq \gamma$} (1,0.5) node[coordinate,label=right:{$1/2$}] (d) {D};
  \draw[name path=constant,very thin] (0,0.5) -- node[line label] {$h(1+2\beta) \geq 1$} (1,0.5);
  \draw[name path=descendant, very thin] (0.5,1) -- node[line label,below] {$\gamma + \beta h >1$} (1,0) node [below] {$1$};
  \draw[name path=supermontant, very thin] (0.5,0) -- node[line label, near end] {$2\gamma - h > 1$} (1,1) node[coordinate] (c) {C};
  \draw[name intersections={of=montant and descendant, by= {a}}];
  \draw[name intersections={of=supermontant and descendant, by= {b}}];
  \fill[fill=black!10] (1,0) -- (b) -- (c) -- (d) -- cycle;
  \fill[fill=black!35] (a) -- (b) -- (c) -- (d) -- cycle;
  \fill[fill=black!55] (b)  --(c) --(d)  --cycle;
  \draw [dashed, thick] (b) -- (c) -- (1,0)--cycle;
  \draw [dashed]  (b)--(1,0);
  \draw (a)--(d);
  \node[left] at (0,1) {$1$};
  \draw[help lines] (b) |- node[black,midway,below] {$3/4$} (0,0) ;
  \node[rectangle,draw] at (0.5,1.2) {$\beta = 1/2$};

\end{tikzpicture}

{\small
In this picture we represent the possible choices for the parameters $h$ and $\gamma$, when $\beta$ varies. 
On the left is the most regular case where $\beta = 1$, on the right we set $\beta = 1/2$. 
In both cases, if $(\gamma,h)$ lies in the lighter region, Theorem~\ref{thm:cvps} holds and the algorithm converges. 
In the middle region, the algorithm converges and the additional convergence estimate
of Proposition~\ref{prp=vitesseQuadratique} holds. Finally, if $(\gamma,h)$ is in the darker region, the CLT of Theorem~\ref{thm=CLTavg} holds. 
All these two regions get smaller when 
$\beta$ is small. Note that even in the most regular case $\beta = 1$, in order to fulfill the hypotheses of Theorem~\ref{thm=CLTavg}, it is necessary to choose $\gamma$ larger than $2/3$ 
and $h$ larger than $1/3$.
}
\caption{Possible choices for $h$ and $\gamma$.}
\end{figure}

\begin{rem}
Proceeding exactly as in the proof of Theorem~\ref{thm=CLTavg}, it is possible to establish a CLT for another weighted version of the algorithm $\widetilde{Z_n}=\frac{1}{n}\sum_{k=1}^n\sqrt{h_k}(Z_k-m)$, which is the empirical mean of $\sqrt{h_n}(Z_n-m)$. Under the same assumptions of Theorem~\ref{thm=CLTavg}, one has:
\[
  \sqrt{n}\widetilde{Z_n}
  \cvl
  \mathcal{N} \left(0, \Gamma^{-1}\covLimite \Gamma^{-1}\right).
\]
\end{rem}
\section{Examples}
We first consider a simple simulated example in order to compare the performances of the averaged algorithm with the more classic static one as well as the recursive Robbins-Monro estimator without averaging. Then, the ability of our recursive averaged estimator to deal with large samples  of very high-dimensional data is illustrated  on the robust estimation of television audience profiles, measured at a minute scale over a period of 24 hours, given the total time spent watching television. All functions are coded in \Rlogo \ (\cite{R10}) and are available on request to the authors.

\subsection{A simulated example}

Consider a Brownian motion $Y$ measured at $d$ equispaced time points in the interval $[0,1],$ so that we have
$\mathbf{Y} = (Y(t_1), \ldots, Y(t_d)).$ Besides, suppose that we know the mean value $X=\int_0^1 Y(t) dt$ of each trajectory $Y.$  We can look for the conditional (geometric) median of vector $\mathbf{Y}$ given $X.$
The joint distribution of $(\mathbf{Y},X)$ is clearly Gaussian with $\E \mathbf{Y} = 0,$ $\E{X}=0,$ 
\[\Cov(Y(t_j),Y(t_\ell)) = \min(t_j, t_\ell), \quad  \Var(X) = \frac 13 \quad \mbox{and  } \Cov(X,Y(t_j)) = t_j\left(1-\frac{t_j}{2}\right).\]
Consequently, the distribution of $\mathbf{Y}$ given $X=x$ is Gaussian with  conditional expectation, for $j=1, \ldots, p,$ 
\begin{align*}
\ECond{Y(t_j)}{X=x} &= \frac{3}{2} t_j (2 - t_j) x,
\end{align*}
and a covariance matrix that does not depend on $x.$
By symmetry of the Gaussian distribution, it is also clear that the conditional expectation is equal to the conditional geometric median, when $H = \mathbb{R}^d$ equipped with the usual Euclidean norm, so that
\begin{align}
m(t_j,x) &=  \frac{3}{2} t_j (2 - t_j) x .
\label{def:condMsim}
\end{align}
The hypotheses on the density $p$ are clearly satisfied since $X$ is a Gaussian random variable.
Furthermore, the Wasserstein distance between two Gaussian laws with expectations $m_1$ and $m_2$ and  the same covariance matrix is simply $\nrm{m_1 - m_2}$, (see \textit{e.g.} \cite{GivensShortt1984}) so that we can deduce, with \eqref{def:condMsim}, that $\beta=1$ in Assumption~\assm{4}.

\medskip

We draw $n$ i.i.d.\ copies of $(\mathbf{Y},X)$ and we focus in this simulation study on the geometric median of $Y$ given $x=0.39,$ which corresponds to the value of the third quartile of $X.$ Note that our conclusions remain unchanged for other non extreme values of $X$.

We first compute the static estimator, named "static kernel" in the following. It is  based on a direct minimization, with the Weiszfeld's algorithm (see \cite{VZ00} and \cite{MNO2010}), of  
\begin{align}
\boldsymbol{\alpha} & \mapsto \sum_{i=1}^n w_i \nrm{\mathbf{Y}_i - \boldsymbol{\alpha}},
\label{def:emprisk}
\end{align}
 where $w_i = \left[ \sum_{\ell=1}^n K(h_n^{-1} (X_\ell - x))\right]^{-1}K(h_n^{-1} (X_i - x))$ and $K$ is the Gaussian kernel. 

The Robbins Monro estimator $Z_n,$ defined in \eqref{def:algo1}, and the averaged estimator $\overline{Z}_n,$ defined in \eqref{def:averaged}, are run for 10 starting points chosen randomly in the sample. Among the 10 estimations,  we retain the one with the smallest empirical risk \eqref{def:emprisk}.

The accuracy  of the different estimators $\widehat{m}$ 
are compared, for different values of the bandwidth $h$ and sample sizes $n,$ with the quadratic criterion,
\begin{align}
R(\widehat{m}) &= \frac{1}{d} \sum_{j=1}^d \left( m(t_j) - \widehat{m}(t_j) \right)^2.
\label{def:riskL2}
\end{align}
 Since $\beta=1,$ we can choose  $\gamma=9/10$ and $h=3/10,$ $c_h=1$, so that the quadratic estimation error for the Robbins-Monro algorithm, will be, up to the $\ln(n)$ factor, of order  $n^{-6/10}$ (see Proposition \ref{prp=vitesseQuadratique}).

Note that, for simplicity of comparison with the static kernel estimator, we also consider fixed values for $h_n  \in \{0.05, 0.10, 0.15, 0.20, 0.25\}$ and  take in this case $\gamma=2/3.$ We are aware that the assumptions needed for the asymptotic convergence are not satisfied but the sample size is fixed in advance here.

We first present in Table~\ref{tab:n500} the mean value, over 500 replications,  of the MSE  defined in \eqref{def:riskL2}, when estimating the conditional median with a sample size of $n=500$ in dimension $d=100.$  For comparison and interpretability of the results, note that $100 R(0)= 18.4.$

We note that, when the sample size is moderate (\textit{i.e.} $n=500$), the interest of considering the averaged recursive estimation procedure is less evident than in the unconditional case (see \cite{CardCC10}) since the Robbins-Monro estimator $Z_n$ defined in \eqref{def:algo1} can perform, for well chosen values of the tuning parameters $c_\gamma$ and $h_n,$ nearly as well as the static estimator. Nevertheless, we can remark that $Z_n$ is highly sensitive to the values of the tuning parameters and its performances deteriorate much with small variations of these parameters as seen in Table~\ref{tab:n500}.
This is not the case of the averaged estimator $\overline{Z}_n,$ defined in \eqref{def:averaged}, which is much less sensitive and thus allows less sharp choices of the values of the tuning parameters provided the descent steps do not force the algorithm to converge too rapidly.
We note again (see \cite{CardCC10}) that for too small values of $c_\gamma$ (i.e.\ $c_\gamma = 0.1$), the algorithm converges too quickly and  averaging leads to estimations that are   outperformed by the direct Robbins-Monro approach. A way to deal with this drawback is to perform  averaging only after a certain number of iterations. All these remarks are clearly illustrated in Figure~\ref{fig:compRMAVE} which presents the estimation error, defined in \eqref{def:riskL2}, for both algorithms and  for different values of~$c_\gamma.$ 

 When the sample size gets larger the interest of the averaging step becomes clearer since  the estimation error of the Robbins-Monro estimator are always larger as soon as $c_\gamma \geq 1$ (see Table~\ref{tab:n2000}). Furthermore, the estimation errors of the static kernel estimator and the averaged recursive one are also now very close to each other. 

\begin{table}[htdp]
\caption{Mean estimation errors ($\times$ 100) of the different estimators, for $n=500,$ $d=100,$  and descent parameter $\gamma=2/3$ when $h_n$ has a constant value and $\gamma=0.9$ when $h_n = n^{-h}$ with $h=\gamma/3 = 0.3.$}
\begin{center}
\begin{tabular}{|c|c|c|c|c|c|c|} \hline
 & \multicolumn{6}{|c|}{Bandwidth $h_n$} \\  \hline
 &  0.05 & 0.10 & 0.15 & 0.20 & 0.25 &  $n^{-0.3}$\\ \hline \hline 
Static kernel & 0.349 & 0.179 & 0.148 & 0.172 & 0.245& \\ \hline
Robbins Monro & & & & & &\\
$c_\gamma = 0.1$ & 0.689& 0.625& 0.659& 0.769 & 0.912 & 2.458\\
$c_\gamma = 0.3$ & 0.370& 0.194& 0.159& 0.178& 0.253 & 0.332 \\
$c_\gamma = 1$ & 0.590 & 0.297& 0.229& 0.240 & 0.297 & 0.183\\
$c_\gamma = 3 $ & 1.177& 0.647& 0.486& 0.425 & 0.453 & 0.248 \\ \hline 
Averaged  & & & & &  &\\
$c_\gamma = 0.1$ & 1.047& 1.000 & 1.051& 1.160& 1.336 & 2.995\\
$c_\gamma = 0.3$ & 0.406 & 0.213& 0.178&0.202  & 0.287 & 0.534 \\
$c_\gamma = 1$ & 0.402& 0.195& 0.160& 0.182& 0.252 & 0.192\\
$c_\gamma = 3 $ & 0.443 & 0.209 & 0.163 & 0.252& 0.256 & 0.170\\ \hline 
\end{tabular}
\end{center}
\label{tab:n500}
\end{table}%

\begin{table}[htdp]
\caption{Mean estimation errors ($\times$ 100) of the different estimators, for $n=2000,$ $d=100,$  and descent parameter $\gamma=2/3$ when $h_n$ has a constant value and $\gamma=0.9$ when $h_n = n^{-h}$ with $h=\gamma/3 = 0.3.$}
\begin{center}
\begin{tabular}{|c|c|c|c|c|c|c|} \hline
 & \multicolumn{6}{|c|}{Bandwidth $h_n$} \\  \hline
 &  0.05 & 0.10 & 0.15 & 0.20 & 0.25 & $n^{-0.3}$  \\ \hline \hline 
Static kernel & 0.082 & 0.053 & 0.060 & 0.099 & 0.176 &  \\ \hline
Robbins Monro & & & & & & \\
$c_\gamma = 0.1$ & 0.139& 0.128& 0.149& 0.205 & 0.324 & 1.321\\
$c_\gamma = 0.3$ & 0.095& 0.061& 0.065& 0.103& 0.181& 0.083 \\
$c_\gamma = 1$ & 0.173 & 0.104& 0.098& 0.126 & 0.194 & 0.061\\
$c_\gamma = 3 $ & 0.403 & 0.230& 0.175& 0.192 & 0.253 & 0.096 \\ \hline 
Averaged  & & & & & & \\
$c_\gamma = 0.1$ & 0.240& 0.237 & 0.270& 0.332& 0.484 & 1.712\\
$c_\gamma = 0.3$ & 0.091 & 0.058& 0.065&0.102  & 0.183 & 0.138 \\
$c_\gamma = 1$ & 0.090& 0.057& 0.063& 0.101& 0.178 & 0.060\\
$c_\gamma = 3 $ & 0.097 & 0.058 & 0.064 & 0.101& 0.180 & 0.057\\ \hline 
\end{tabular}
\end{center}
\label{tab:n2000}
\end{table}%

  \begin{figure}
   \begin{center}
  \includegraphics[height=15cm]{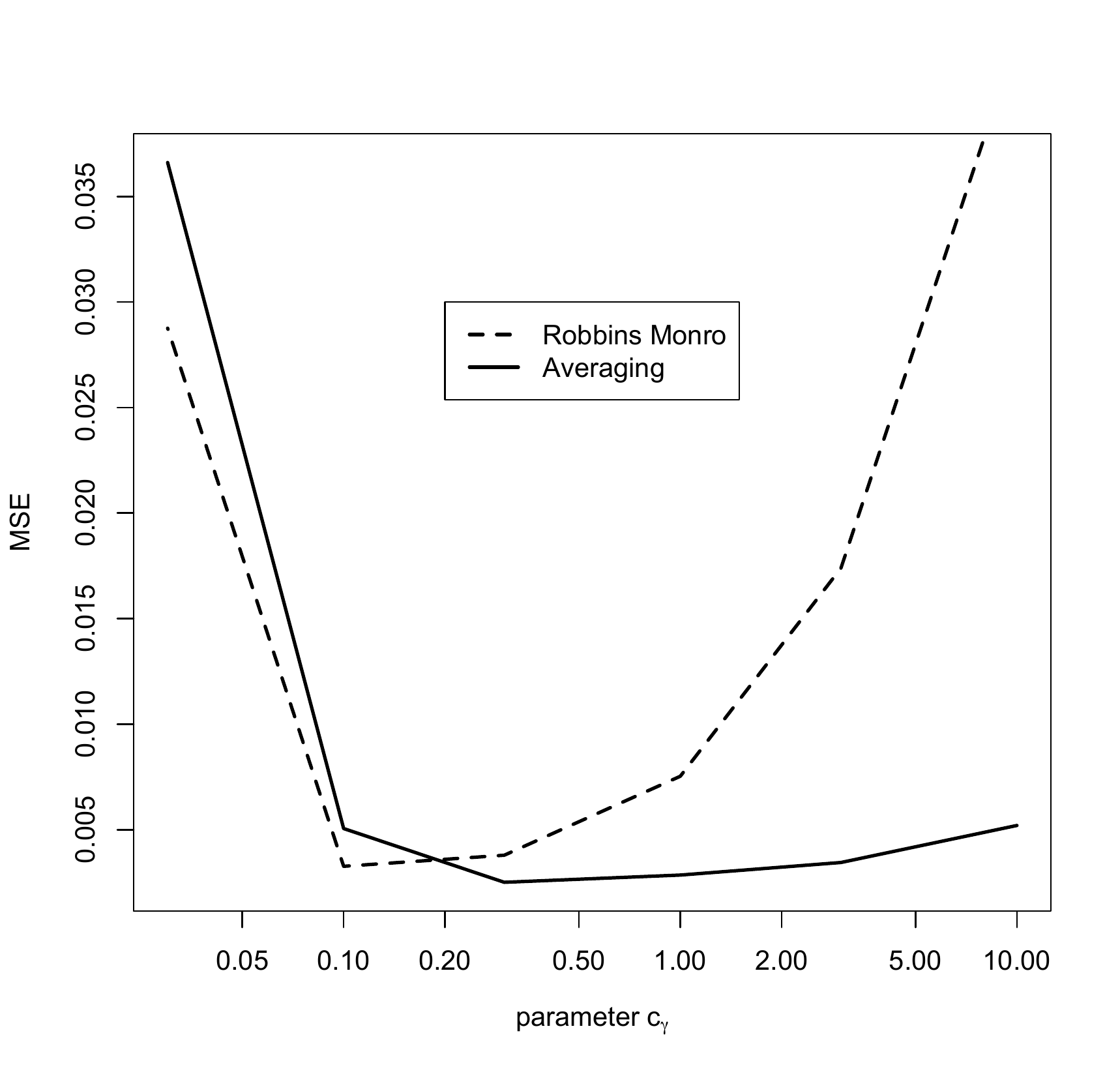}
 \caption{Comparison of the two recursive algorithms according to the mean square error of estimation for different values of $c_\gamma$ (with a logarithmic scale). The sample size is $n=500$ and $d=100.$}
 \label{fig:compRMAVE}
 \end{center}
   \end{figure}

\subsection{Television audience data}

We have a sample of  $n=5422$ individual audiences measured every minute over a period of 24 hours  and by the M\'ediam\'etrie company in France. For $j=1, \ldots, 1440,$ an observation $Y_i(t_j)$ represents the proportion of time spent by the individual $i$ watching television during the $j$\textsuperscript{th} minute of this day. 
Thus, each vector $\mathbf{Y}_i$ belongs to $[0,1]^{1440}.$ Note that in fact the first measurement $t_1$ is made at 3 AM  of day $d$ and the last one just before 3 AM of day $d+1$ (see Figure \ref{fig1}). A more detailed description of these data can be found in \cite{CCZ11}.

We are interested in estimating  television consumption behaviors, over a 24 hours period, according to the total time spent watching television.
The covariate  $X,$ is the proportion of time spent watching television over the considered period, $X_i = (\sum_j Y_i(t_j))/1440$,  for $i=1, \ldots, n=5422.$
We consider  the  quantile values of  $X$ which are, in the sample, $q_{25} = 0.0599,$ $q_{50} = 0.128,$ $q_{75} = 0.225$ and $q_{90} = 0.348.$
This means for example, that the ten percent of consumers with the highest consumption levels  spend more than 34.8 \% of their time watching television whereas the 25 \% of consumers with the lowest consumption levels  spend less than 6\% of their time watching television.

We have drawn in Figure~\ref{fig1} the estimated conditional median profiles with a bandwidth value set to $h_n=0.05$ and a descent parameter $c_\gamma=0.5,$ for $x \in \{q_{25}, q_{50}, q_{75},q_{90}\}.$ For comparison and better interpretation, we have also plotted the overall geometric median as well as the mean profile.  One can note that the shape of the conditional profiles strongly depend on the value of the covariate and that multiplicative models  that could be thought to be natural (see the simulation study), are in fact not adapted for modeling the conditional audience median profiles. This is clear if we compare, for example, the levels of the conditional median curves for $x= q_{75}$ and $x=q_{90}$ at time 15 and at time 21. Around 21, their values are approximately the same and are close to the global maximum whereas  at time 15  the value of the conditional median for $x=q_{90}$ is about twice the value of the conditional median for $x=q_{75}.$

From a computational speed point of view, for one starting point, our algorithm, which takes less than two seconds, is about 70 times faster than the static estimator which requires 140 seconds to converge.

  \begin{figure}
   \begin{center}
  \includegraphics[height=14cm,width=15.5cm]{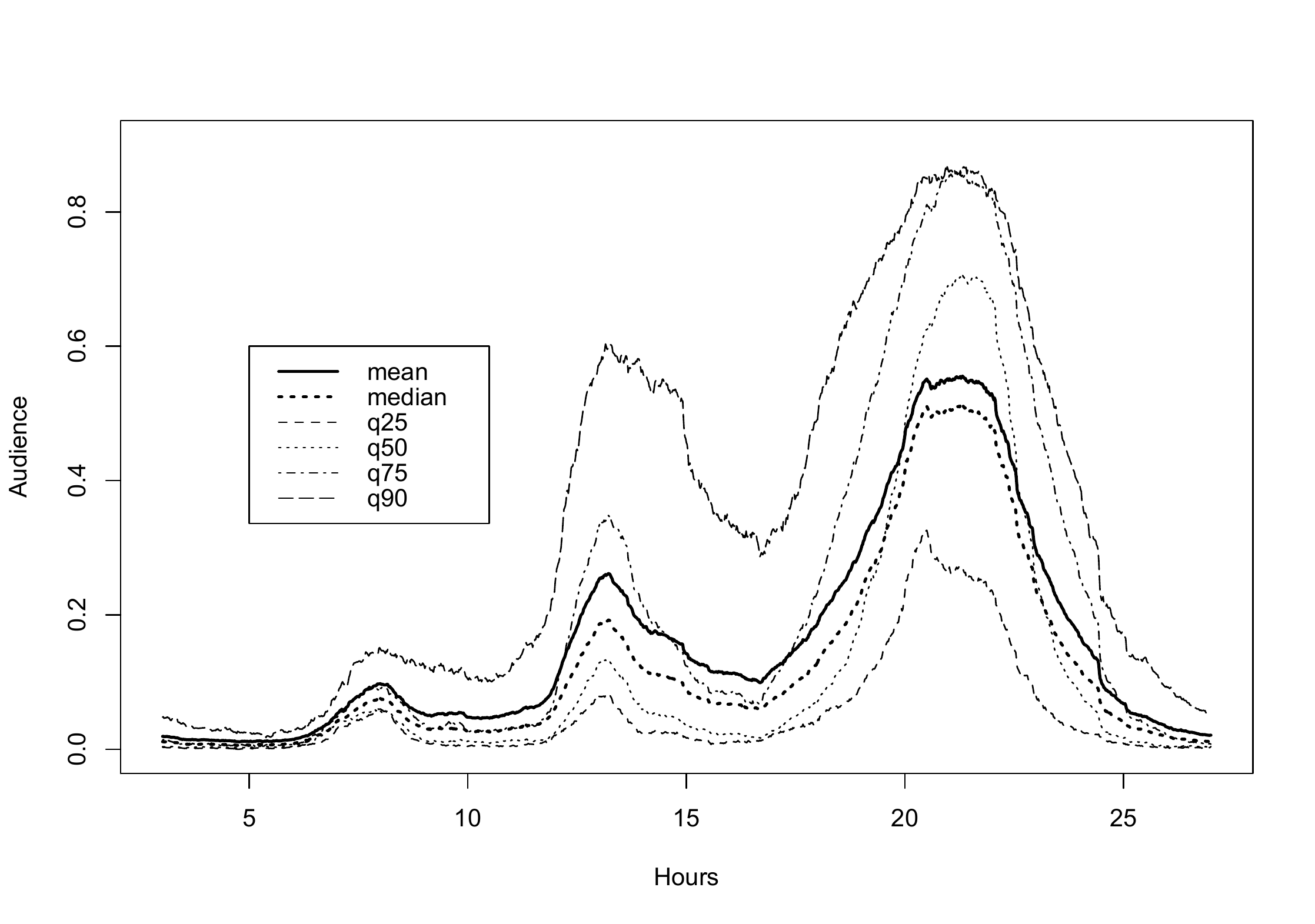}
 \caption{Estimation of the conditional median profile for different levels of total time spent watching television, on the 6th September 2010.}
 \label{fig1}
 \end{center}
   \end{figure}

\section{Proofs}
\begin{notation}
  In all the proofs, $x$ will be a fixed point in $\xR$ satisfying $p(x)>0$. 
  Since  $x$ will not vary, we will abuse notation and drop it from various quantities. In particular, in the following
  $m$ will denote the median $m(x)$ of the conditional law $\mu_x$, and we will write
  $Z_n = Z_n(x)$ and  $\Phi(\alpha) = \Phi(x,\alpha)$. 
\end{notation}
\subsection{About the assumptions}
\label{sec=wasserstein}
We begin by  a simple geometric result on unit vectors. For $a,b$
two points in $H$, let $D(a,b)$ be the unit vector ``starting'' from $a$ 
in the direction of $b$. 
 Now if $a,b,c$ are three points in $H$, such that $\nrm{a-b} \leq \nrm{a-c}$, Thales' theorem 
 shows that:
 \[ \frac{\nrm{D(a,b) - D(a,c)}}{\nrm{b - c'}} = \frac{\nrm{a + D(a,b) - a} }{\nrm{a-b}} = \frac{1}{\nrm{a-b}},\]
 so 
 \[ \nrm{D(a,b) - D(a,c)} \leq \frac{\nrm{b-c'}}{\nrm{a-b}} \leq \frac{\nrm{b-c}}{\nrm{a-b}}.\]

 \begin{center}
 \begin{tikzpicture}[point/.style={circle,fill,black,inner sep=0.1em}]
   \draw (5:8) -- (0,0)  -- (30:7);
   \node[point,label={below:$c$}] (c) at (5:7) {};
   \node[point,label={below:$a$}] (a) at (0,0) {};
   \node[point,label={above:$b$}] (b) at (30:6) {};
   \node[point,label={below:$a+D(a,c)$}] (ac) at (5:2) {};
   \node[point,label={above left:$a+D(a,b)$}] (ab) at (30:2) {};
   \node[point,label={below:$c'$}] (c') at (5:6) {};
   \draw[blue] (ac)--(ab);
   \draw[blue] (c')--(b);
 \end{tikzpicture}
 \end{center}
 
 In any case, 
 \begin{equation}
   \label{eq=Thales1}
   \nrm{D(a,b) - D(a,c)} \leq  \frac{\nrm{b-c}}{\min(\nrm{a-b},\nrm{a-c})}.
 \end{equation}
 We will need a ``decoupled'' version of this inequality:
 \begin{equation}
   \label{eq=Thales2}
   \nrm{D(a,b) - D(a,c)} 
   \leq  \frac{\nrm{b-c}}{\nrm{a-b}}
       + \frac{\nrm{b-c}}{\nrm{a-c}}.
 \end{equation}

  We can now prove that \assm{4} and \assm{6} imply \assm{3}. 
  Let $x,x'$ be two real numbers in the support of $p$. 
  Recall that $\mu_x$ denotes the law $\law(Y | X = x)$. 
  Let $Y$ and $Y'$ be two random variables with respective laws $\mu_x$ and $\mu_{x'}$, 
  such that their joint law $\pi$ achieves  the Wasserstein distance.  Let us first show that:
  \begin{equation}
    \label{eq=gradHolder}
  \forall \alpha \in H, \quad
  \nrm{\EE{D(\alpha,Y)} - \EE{D(\alpha,Y')}} \leq C \abs{x - x'}^\beta.
\end{equation}
  Fix an $\alpha\in H$. We have:
  \begin{align*}
    \nrm{\EE{D(\alpha,Y)} - \EE{D(\alpha,Y')}}
    &\leq \nrm{\E_\pi \left[ D(\alpha,Y) - D(\alpha,Y') \right] }  \\
    &\leq \E_\pi \left[ \nrm{D(\alpha,Y) - D(\alpha,Y')}\right].
  \end{align*}
  Now we use the geometric bound \eqref{eq=Thales2}, and Hölder's inequality:
  \begin{align*}
    \nrm{\EE{D(\alpha,Y)} - \EE{D(\alpha,Y')}}
    &\leq \E_\pi\left[ \frac{\nrm{Y - Y'}}{\nrm{Y - \alpha}}\right]
    + \E_\pi\left[ \frac{\nrm{Y - Y'}}{\nrm{Y'- \alpha}} \right] \\
    &\leq \left( \sqrt{\EE{\frac{1}{\nrm{Y-\alpha}^2}}}
               + \sqrt{\EE{\frac{1}{\nrm{Y' - \alpha}^2}}} \right)
	      \sqrt{ \E_\pi\left[ \nrm{Y-Y'}^2\right]}.
  \end{align*}
  The first term is bounded by $2\sqrt{\cstMoment}$ thanks to \assm{6}. 
  The second one is, by definition, the Wasserstein distance, and 
  is bounded by $\cstWass\abs{x - x'}^\beta$ thanks to \assm{4}, therefore 
  \eqref{eq=gradHolder} holds. Since $p$ is $\mathcal{C}^2$ with compact
  support, the product $\Phi(x,\alpha) = p(x) \EE{ D(\alpha, Y) \middle| X = x}$ is itself
  uniformly $\beta$-Hölder continuous; in other words \assm{3} holds. 

\subsection{First properties}
Recall that, for $z \in H,$ $\Phi_h(z)$ is defined by \eqref{def:PhiH} as the conditional expectation of the step, with window size $h$. 
When $h$ goes to zero, this ``expected step'' converges.
\begin{prop}
  \label{prp=PhiEtPhiH}The expected step is bounded:
  \begin{equation}
    \label{eq:PhiHBornee}
    \exists C, 
  \forall h>0, \forall \alpha \in H, \quad
  \nrm{\Phi_h(\alpha)} \leq p_{\max}.
\end{equation}
  Moreover,  under hypotheses \assm{2}, \assm{3} and  \assm{5},
  there exists a constant $C$ such that:
  \begin{equation}
    \nrm{\Phi_h(\alpha) - \Phi(\alpha)} \leq C h^\beta,
    \label{eq=Phi}
  \end{equation}
where $\Phi(x,\alpha)$ is defined by \eqref{eq=defPhi}.
\end{prop}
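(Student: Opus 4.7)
The plan is to express both quantities $\Phi_h(\alpha)$ and $\Phi(\alpha)$ as integrals against the same rescaled kernel, and then estimate the difference pointwise using the Hölder hypothesis \assm{3}. The first ingredient is a conditioning step: writing
\[
  \Phi_h(\alpha) = -\EE{\frac{Y-\alpha}{\nrm{Y-\alpha}} \frac{1}{h} K\!\left(\frac{X-x}{h}\right)}
\]
and conditioning on $X = u$ turns $\Phi_h$ into
\[
  \Phi_h(\alpha) = \int \Phi(\alpha, u) \frac{1}{h} K\!\left(\frac{u-x}{h}\right) du,
\]
because $p(u) \,\ECond{\frac{Y-\alpha}{\nrm{Y-\alpha}}}{X = u} = -\Phi(\alpha,u)$ by \eqref{eq=defPhi}.

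For the uniform bound \eqref{eq:PhiHBornee}, I would estimate $\nrm{\Phi_h(\alpha)}$ by pulling the norm inside the expectation; since $\nrm{(Y-\alpha)/\nrm{Y-\alpha}} = 1$, only the kernel weight and the density remain. After the change of variable $v = (u-x)/h$ the integral becomes $\int K(v) p(x+hv)\,dv$, which is at most $p_{\max} \int K = p_{\max}$ by \assm{5}.

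For the rate \eqref{eq=Phi}, I would use the fact that $\int h^{-1} K((u-x)/h)\,du = 1$ to rewrite
\[
  \Phi_h(\alpha) - \Phi(\alpha) = \int \bigl(\Phi(\alpha, u) - \Phi(\alpha, x)\bigr) \frac{1}{h} K\!\left(\frac{u-x}{h}\right) du,
\]
then bring the norm inside and apply \assm{3} to get $\nrm{\Phi(\alpha,u)-\Phi(\alpha,x)} \leq \cstHolder \abs{u-x}^\beta$. The same change of variable $v = (u-x)/h$ then yields
\[
  \nrm{\Phi_h(\alpha) - \Phi(\alpha)} \leq \cstHolder\, h^\beta \int \abs{v}^\beta K(v)\,dv,
\]
and the remaining integral is finite because $K$ has compact support by \assm{5}, giving the desired constant $C$.

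None of the steps looks genuinely delicate: the main thing to be careful about is keeping track of signs and of the factor $p(u)$ when identifying $\Phi_h$ with a convolution of $\Phi(\alpha,\cdot)$ against the rescaled kernel, since \assm{3} is stated on this full product rather than on the conditional expectation alone (a point that was addressed in the preceding subsection). Once this identification is clean, both inequalities follow from a single change of variable together with $\int K = 1$ and the boundedness of $p$.
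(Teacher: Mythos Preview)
Your proof is correct and follows essentially the same route as the paper: write $\Phi_h(\alpha)$ as the convolution $\int \Phi(\alpha,u)\,h^{-1}K((u-x)/h)\,du$, bound $\nrm{\Phi_h}$ via $\nrm{\Phi(\alpha,u)}\leq p(u)\leq p_{\max}$ and $\int K=1$, and control the difference by pulling the norm inside, applying \assm{3}, and changing variables to extract the factor $h^\beta$. The only cosmetic difference is that the paper invokes the bound $\nrm{\Phi(\alpha,u)}\leq p_{\max}$ on the integral representation directly, whereas you go back to the original expectation; both are equivalent.
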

\begin{proof}
  With our strong hypotheses this result is easy to prove. Indeed 
  \[ \Phi_h(\alpha) = \int \frac{1}{h} K\left(\frac{x'-x}{h}\right) \Phi(x',\alpha)dx',\]
  so that by Jensen's inequality
  \[ \nrm{\Phi_h(\alpha)} \leq p_{\max} \int_{x'} \frac{1}{h} K\left( \frac{x - x'}{h} \right) dx' = p_{\max}.
  \]
  Moreover, 
  \[
    \nrm{\Phi_h(\alpha) - \Phi(\alpha)}
    \leq \int \frac{1}{h} K\left( \frac{x-x'}{h}\right) \nrm{\Phi(x',\alpha) - \Phi(x,\alpha)} dx'.
    \]
    Now we use Assumption \assm{3} to bound the norm by $\cstHolder\abs{x'-x}^\beta,$ the compact support of the bounded function $K$ (Assumption \textbf{A5}) and we integrate:
  \begin{align*}
    \nrm{\Phi_h(\alpha) - \Phi(\alpha)}
    &\leq \cstHolder\int \frac{1}{h} K\left( \frac{x-x'}{h}\right)\abs{x - x'}^\beta dx' \\
    &\leq \cstHolder \int  K(t) h^\beta t^\beta dt \\
    &\leq C h^\beta. \qedhere
  \end{align*}
\end{proof} 

Thanks to this result, we have a natural decomposition of algorithm \eqref{def:algo1}. Let us introduce the two following quantities:
\begin{align}
  \label{eq=defD}
  D_h(z) &= \Phi_h(z) - \Phi(z), \\
  \xi_{n+1} &= \left[ -\frac{Y_{n+1} - Z_{n}}{\nrm{Y_{n+1} - Z_{n}}} \frac{1}{h_n} K \left( \frac{X_{n+1}-x}{h_n}\right) - \Phi_{h_n}(Z_n) \right].
  \label{eq=defXi}
\end{align}
In terms of these quantities,  we can rewrite \eqref{def:algo1} as:
\begin{align}
  Z_{n+1} &= Z_{n} - \gamma_n \Phi(Z_n)  - \gamma_n D_{h_n}(Z_n) - \gamma_n \xi_{n+1}.
\label{def:algo2}
\end{align}
The first term $D_{h_n}(Z_n)$ will be controlled by Proposition~\ref{prp=PhiEtPhiH}. The second term $\xi_{n+1}$ defines  a sequence of martingale differences,
since the conditional expectation given  the sequence of $\sigma$-algebra $\CF_n=\sigma(Z_1,\ldots, Z_n)=\sigma(Y_1,X_1,\ldots, Y_n,X_n)$ satisfies
\begin{equation*}
  \EFn{ \xi_{n+1}} = 0, \ a.s.
\end{equation*}
For future reference, let us note the following bound on $\xi_n$:
\begin{align}
  \EFn{ \nrm{\xi_{n+1}}^2} &=  \EFn{ \nrm{ \frac{Y_{n+1} - Z_{n}}{\nrm{Y_{n+1} - Z_{n}}} \frac{1}{h_n} K \left( \frac{X_{n+1}-x}{h_n}\right) }^2} - \nrm{\Phi_{h_n}(Z_n)}^2, \  a.s \nonumber \\
   &\leq \frac{1}{h_n^2} \EE{K^2 \left( \frac{X_{n+1}-x}{h_n}\right)} - \nrm{\Phi_{h_n}(Z_n)}^2, \  a.s \nonumber \\
   \label{eq=crochetXi}
   &\leq \frac{C}{h_n}, \ a.s.
\end{align}

\subsection{Almost sure convergence}
In this section we prove Theorem~\ref{thm:cvps}. Define $V_n = \nrm{Z_n - m}^2$. By \eqref{def:algo2}, we have:
\begin{align*}
  V_{n+1}  & = V_n + \gamma_n^2\nrm{\Phi(Z_n)}^2 + \gamma_n^2 \nrm{D_{h_n}(Z_n) + \xi_{n+1}}^2 \\
  &\quad + 2 \scal{Z_n - m,  \gamma_n \Phi(Z_n)} + 2\gamma_n \scal{Z_n - m, D_{h_n}(Z_n) + \xi_{n+1}}  \\
  &\quad + 2 \gamma_n^2 \scal{\Phi(Z_n), D_{h_n}(Z_n) + \xi_{n+1} }.
\end{align*}
The first scalar product is non positive, we denote it by $(-\eta_n)$. We condition by $\Fn$: the $\xi_{n+1}$ in the scalar products disappear by the martingale property. 
Then we use Hölder's inequality:
\begin{align*}
  \EFn{ V_{n+1}}
  & \leq V_n + \gamma_n^2\nrm{\Phi(Z_n)}^2 + 2 \gamma_n^2 \nrm{D_{h_n}(Z_n)}^2 + 2\gamma_n^2 \EFn{ \nrm{\xi_{n+1}}^2} \\
  &\quad - \eta_n + 2\gamma_n \nrm{Z_n - m}\nrm{D_{h_n}(Z_n)}  + 2 \gamma_n^2 \nrm{\Phi(Z_n)}\nrm{ D_{h_n}(Z_n)}.
\end{align*}
On the last term we use $2 xy \leq x^2 + y^2$ to get:
\begin{align*}
  \EFn{ V_{n+1}}
  & \leq V_n + 2\gamma_n^2\nrm{\Phi(Z_n)}^2 + 3 \gamma_n^2 \nrm{D_{h_n}(Z_n)}^2 + 2\gamma_n^2 \EFn{ \nrm{\xi_{n+1}}^2} \\
  &\quad - \eta_n  + 2\gamma_n \nrm{Z_n - m}\nrm{D_{h_n}(Z_n)}.
\end{align*}
On the last term we bound $\nrm{Z_n - m}$ by $(1 + V_n)$ to get:
\begin{align*}
  \EFn{ V_{n+1}}
  & \leq\left(1 + 2 \gamma_n \nrm{D_{h_n}(Z_n)}\right) V_n 
  + 2\gamma_n^2\nrm{\Phi(Z_n)}^2 + 3\gamma_n^2 \nrm{D_{h_n}(Z_n)}^2  \\
  &\quad + 2\gamma_n^2 \EFn{ \nrm{\xi_{n+1}}^2}
  + 2\gamma_n \nrm{D_{h_n}(Z_n)} - \eta_n.
\end{align*}
Finally we bound $D_{h_n}(Z_n) $ by $Ch_n^\beta$ thanks to \eqref{eq=Phi}, $\EFn{\nrm{\xi_{n+1}}^2}$ by $C/h_n$ thanks to \eqref{eq=crochetXi}
and $\nrm{\Phi(Z_n)}$ by $p(x)$. This yields:
\begin{align*}
  \EFn{ V_{n+1}}
  & \leq\left(1 + 2C \gamma_n h_n^\beta\right) V_n 
  + 2p(x)\gamma_n^2 + 3 C^2\gamma_n^2 h_n^{2\beta}  \\
  &\quad + 2C\frac{\gamma_n^2 }{h_n}
  + 2C\gamma_nh_n^\beta  - \eta_n\\
  &\leq (1 + b_n)V_n + \chi_n - \eta_n,
\end{align*}
where $b_n = 2C \gamma_n h_n^\beta$ and $\chi_n = 3C^2\gamma_n^2h_n^{2\beta} + 2C\gamma_n^2h_n^{-1} +2 C\gamma_nh_n^\beta$ satisfy:
\begin{align*}
  \sum b_n &< \infty, & \sum \chi_n &<\infty.
\end{align*}
Therefore by the Robbins--Siegmund Lemma (Theorem 1.3.12 of \cite{Duf97}), $V_n$ converges almost surely and $\sum_n \eta_n < \infty$. This implies that the limit of $V_n$ is zero, by the same argument than in \cite{CCZ11}, assuming that $\sum_n \gamma_n=\infty$.

\subsection{Proof of proposition \ref{prp=vitesseQuadratique}}
For the sake of clarity, we follow the same steps as the proof of Proposition 3.2 in \cite{CCZ11}, 
and emphasize the necessary changes. 

\paragraph{Step 1 --- a spectral decomposition.}
This step is exactly the same as in \cite{CCZ11}: thanks to a spectral decomposition of $\Gamma$, 
we can define  the operators:
\begin{align*}
  \alpha_k &= \idt - \gamma_k \Gamma,
  &
  \beta_n  &= \alpha_n \alpha_{n-1} \cdots \alpha_1.
\end{align*}
Introducing the sequence  of real functions, for $n \in \xN,$ 
\[
f_n(x) = \prod_{k=1}^n (1- \gamma_k x),
\]
we see that  
each  operator $\beta_n$ can be also expressed  as follows:
\[
\beta_n x = \sum_{\lambda\in\Lambda} f_n(\lambda) \scal{e_\lambda,x} e_\lambda, \quad x \in H,
\]
their inverses are bounded operators, and satisfy: $\beta_n^{-1}x = \sum_{\lambda\in\Lambda} f_n^{-1}(\lambda) \scal{e_\lambda,x} e_\lambda.$ 

Moreover there exist constants $\kappa_1$, $\kappa_2, \kappa_3$ such that:
\begin{equation}
  \label{eq=asymptotiqueBetaN}
\begin{aligned}
 \forall x \in  \sigma(\Gamma), 
 \quad
 \kappa_1 \exp\left( - s_n x \right) &\leq f_n(x) \leq \kappa_2 \exp\left( -s_n x \right),  \\
 \abs{s_n  -  \frac{\cG}{1-\gamma} n^{1 - \gamma}} &\leq \kappa_3,
 \end{aligned}
 \end{equation}
 where we recall that $s_n = \sum_{k=1}^{n} \gamma_k$, and $\gamma_k = \cG k^{-\gamma}$. 

\paragraph{Step 2 --- Decomposition of the algorithm.}
Recall the decomposition \eqref{def:algo2} , and rewrite the algorithm as follows:
\begin{align}
  Z_{n+1}
  &= Z_n - \gamma_n \xi_{n+1} - \gamma_n\Phi(Z_n) - \gamma_n D_{h_n}(Z_n)
  \nonumber\\
  \label{def:algo3}
  &= Z_n - \gamma_n \xi_{n+1} - \gamma_n(\Gamma(Z_n-m) + \delta_n) - \gamma_n D_{h_n}(Z_n)
\end{align}
where $\delta_n = \Phi(Z_n) - \Gamma(Z_n-m)$ is the difference between the gradient
of $G$ and the gradient of its quadratic approximation. Compared to \cite{CCZ11}, 
there are two differences: the martingale difference $\xi_n$ has changed, 
and there is an additional term $\gamma_n D_{h_n}(Z_n)$. 
Therefore:
\begin{equation}
  \label{eq=decompositionI}
  \forall k,\quad
  Z_{k+1} - m = \alpha_k (Z_k - m)  - \gamma_k \xi_{k+1} - \gamma_k \delta_k - \gamma_k D_{h_k}(Z_k).
\end{equation}
Rewriting $\alpha_{n-1}\alpha_{n-2}\cdots \alpha_{k+1}$ as $\beta_{n-1}\beta_k^{-1}$, we get by induction, 
\begin{equation}
  \label{eq=decompositionII}
  Z_n-m=\beta_{n-1}(Z_1-m) + \beta_{n-1}M_n - \beta_{n-1}R_{n-1} - \beta_{n-1}R'_{n-1},
\end{equation}
where
\begin{align*}
  R_n &= \sum_{k=1}^{n-1}\gamma_k \beta_k^{-1}\delta_k  \\
M_n &= -\sum_{k=1}^{n-1}\gamma_k \beta_k^{-1}\xi_{k+1} \\
R'_n &= \sum_{k=1}^{n-1}\gamma_k \beta_k^{-1}D_{h_k}(Z_k).  \\
\end{align*}
At this point, the first and third term are the same as in \cite{CCZ11}, the martingale 
has changed  and there is an additional remainder term $R'_n$. 

\paragraph{Step 3 --- The deterministic term.} 
Just as in \cite{CCZ11}, we get:
\begin{equation}
  \label{eq=termeDeterministe}
  \EE{\nrm{\beta_{n-1}(Z_1 - m)}^2} 
  \leq C \exp\left( -2 n^{1-\gamma}\right) \EE{\nrm{Z_1 - m}^2}.
\end{equation}
\paragraph{Step 4 --- The martingale.}
\newcommand{\Ml}{M^\lambda}
Still following \cite{CCZ11}, we use the spectral decomposition to deal with the martingale part. The changes appear 
just before eq.\ (41) in that paper, where the bound on $\E[\nrm{\xi_k}^2]$ has to be changed (from $1$ to $C/h_n$, using
the new bound \eqref{eq=crochetXi}). Then we use the bounds \eqref{eq=asymptotiqueBetaN} to get:
\begin{align}
  \notag
  \E\left[\nrm{\beta_{n-1}M_n}^2\right]
  &\leq C \sum_{k\leq n-1} \frac{\gamma_k^2}{h_k} \left(
      \frac{f_{n-1}(\lmin)}{f_k(\lmin)}
    \right)^2 \\
    \label{eq=belleSomme} 
    &\leq C \sum_{k\leq n-1} \frac{\gamma_k^2}{h_k} \exp\left( - \frac{1}{1-\gamma} \left( n^{1 - \gamma} - k^{1- \gamma} \right) \right). 
\end{align}
Once more, the first terms in the sum are negligible (thanks to the exponential), and we
isolate the last terms, for $k\geq l(n)$,  where $l(n)$ is given by
\begin{equation}
  \label{eq=choixDeL}
l(n)^{1-\gamma} = n^{1 - \gamma} - c_\alpha\ln(n) \ , 
\end{equation}
for some constant $c_\alpha$. Choosing $c_\alpha$ large enough, the arguments from \cite{CCZ11} ensure
that the main contribution comes from the last terms. The number of terms, that is $n-l(n)$, is of the order $\ln(n)n^\gamma$, 
and $\gamma_{l(n)}^2 / h_{l(n)}$ is equivalent to $cn^{h - 2\gamma}$. 
Therefore
\begin{equation}
\label{eq=termeMarting}
\E\left[ \nrm{ \beta_{n-1} M_n}^2 \right] \leq  C \frac{\ln(n)}{n^{\gamma - h}}.
 \end{equation}

\paragraph{Step 5 --- the error terms.} \ \\

The first error term is  $R_n = \beta_{n-1} \sum_{k=1}^n \gamma_k \beta_k^{-1}\delta_k$, where $\delta_k = \Phi(Z_k) - \Gamma(Z_k -m)$. 
This one can be treated exactly as in \cite{CCZ11}. We recall 
the definition of the event $\Omega_N$: 
\[
  \Omega_N 
  = \left\{ \omega,
  \begin{array}{r}
    \forall n \geq N, \forall k\geq n - l(n), \quad  \nrm{Z_k(\omega) - m} \leq 1/K \\
    \qquad \text{ and } \nrm{\delta_k(\omega)} \leq C_r\nrm{Z_k(\omega) - m}^2\\
    \multicolumn{1}{l}{
    \forall k, \nrm{\delta_k(\omega)} \leq N.
    }
  \end{array}
  \right\},
\]
for a value of $K$ to be chosen later, and $l(n)$ defined by \eqref{eq=choixDeL}. 
Then, for any power of $n$ (say $n^{-42}$) there is a $C$ such that, on $\Omega_N$ and  for $n\geq N,$
\begin{equation}
  \label{eq=bidule}
  \nrm{\beta_{n-1}R_n}^2
  \leq 
  \frac{CN^2}{n^{42}} + \frac{C}{K^2} 
        \sum_{k=l(n) +1}^n \gamma_k \nrm{Z_k - m}^2.
\end{equation}

We now turn to the bound of the new error term $R'_n = \beta_{n-1} \sum_{k=1}^{n-1} \gamma_k \beta_k^{-1} D_{h_k}(Z_k)$. 
To bound $D_h$, we use \eqref{eq=Phi}:
\[ \nrm{D_h(z)} \leq C h^\beta. \]
Therefore for $N$ large enough, and for $k\geq l(n)$, 
\[ \nrm{\ind{\Omega_N} D_{h_k}(Z_k) }\leq C h_k^\beta.\]
For $k$ smaller than $l(n)$, we use the crude bound $D_{h_k}(Z_k) \leq p_{\max} + 1$. Finally we get:
\[
\nrm{\ind{\Omega_N} R'_n} \leq \frac{C}{n^{42}} + (n-l(n)) \gamma_{l(n)} h_{l(n)}^\beta. 
\]
The last term is bounded by $C n^{-\beta h}$ and dominates the first term. 

Finally, since by assumption, $h(1+2\beta) \geq \gamma$, one gets
\begin{equation}
  \nrm{\ind{\Omega_N} R'_n}^2 \leq \frac{C}{n^{\gamma - h}}.
  \label{eq=resteBis}
\end{equation}

Now we use \eqref{eq=termeDeterministe}, \eqref{eq=termeMarting}, \eqref{eq=bidule}
and \eqref{eq=resteBis} to bound the four terms that appear in  \eqref{eq=decompositionII}. We get, for $n\geq N$ and some new constant~$C$:
\begin{align*}
  \EE{\ind{\Omega_N} \nrm{Z_n - m}^2 }  
  &\leq \frac{C \ln(n)}{n^{\gamma-h}} + \frac{C'}{K^2} \sup_{l(n)<k\leq n} \EE{\ind{\Omega_N} \nrm{Z_k -m}^2}.
\end{align*}
By the same induction than in \cite{CCZ11}, 
we obtain the bound announced in Proposition~\ref{prp=vitesseQuadratique}. 

\subsection{Proof of Theorem \ref{thm=CLTavg}}
The following proof follows the same guidelines as the proof of Theorem 3.4 in \cite{CCZ11}. Again we emphasize the necessary changes due to the introduction of the kernel and of the conditional distribution. We first linearize the target function around the conditional median $m$ as in \eqref{def:algo3}:
\[
  \forall n,\quad
  Z_{n+1} - m = (\idt - \gamma_n \Gamma) (Z_n - m)  - \gamma_n \xi_{n+1} - \gamma_n \delta_n -\gamma_n D_{h_n}(Z_n),
\]
where $(\xi_n)$ is a martingale difference sequence. Therefore, for all $k$, 
\begin{equation}
  \label{eq=gragrigrou}
  \Gamma(Z_k - m) = \gamma_k^{-1}\left( (Z_k - m) - (Z_{k+1} - m)\right) - \xi_{k+1} - \delta_k -  D_{h_k}(Z_k).
\end{equation}
Define now,
\[
T_n \egaldef Z_n - m, \quad 
\Tbar_n\egaldef \Zbar_n - m 
\quad \text{and} \quad
M_{n+1}\egaldef \sum_{k=1}^n\xi_{k+1},
\]
and sum \eqref{eq=gragrigrou} over $k$
\[
n\Gamma\Tbar_n = 
\sum_{k=1}^n \frac{1}{\gamma_k}\left( T_k - T_{k+1} \right)
- \sum_{k=1}^n\left(\delta_k+D_{h_k}(Z_k)\right) - M_{n+1},
\]
so that
\begin{equation}
\label{dec-cle-clt}
\frac{n}{\sqrt{\sum_{k=1}^n\frac{1}{h_k}}}\Gamma\Tbar_n = 
\frac{n}{\sqrt{\sum_{k=1}^n\frac{1}{h_k}}}\left(
\frac{T_1\sqrt{h_1}}{\gamma_{1}}
- A_n + A'_n- A''_n
\right)
+ \frac{n}{\sqrt{\sum_{k=1}^n\frac{1}{h_k}}} M_{n+1},
\end{equation}
where
\begin{align*}
A_n &:= \frac{T_{n+1}}{\gamma_{n}},\\
A'_n &:=\sum_{k=2}^n T_k\left[\frac{1}{\gamma_k}-\frac{1}{\gamma_{k+1}}
  \right], \\
  A''_n &:=\sum_{k=1}^n\left(\delta_k+D_{h_k}(Z_k)\right).
\end{align*}

\paragraph{Step Zero --- convergence of covariance operators}
Our first task is to establish a central limit theorem for the last term of \eqref{dec-cle-clt}:
\begin{equation}
  \label{eq=martingaleCLT}
\frac{n}{\sqrt{\sum_{k=1}^n\frac{1}{h_k}}}M_n \cvl \mathcal{N}\left(0, \covLimite\right),
\end{equation}
where $\covLimite$ is the limiting covariance defined by \eqref{eq=defCovLimite}. 
On the space of linear operators  on $H$ we consider two classical norms, the (strong) operator norm
and the Hilbert-Schmidt norm:
\begin{align*}
  \nrmOp{A} = \sup\left\{ \nrm{Ay}_H  ; \nrm{y} \leq 1\right\}, \\
  \nrmHS{A} = \left( \sum_{i=0}^\infty \scal{Ae_j,e_j}^2 \right) ^{1/2},
  \label{eq=defNorms}
\end{align*}
where $e_j$ is an orthonormal base of $H$.  The following lemma will be useful.
\begin{lem}
  \label{lem:cvgDesCov}
  Define a random covariance operator $\Sigma_n$ by:
  \begin{equation}
    \Sigma_n = h_n \EFn{ \xi_{n+1} \otimes \xi_{n+1}}. 
    \label{eq=defSigmaN}
  \end{equation}
  Then:
  \begin{equation}
    \sqrt{\covLimite_n} \xrightarrow[n\to \infty]{\text{H.-S.}} \sqrt{\covLimite}, \quad a.s. 
    \label{eq=cvgSigmaN}
  \end{equation}
  In particular, $\covLimite_n$ converges to $\Sigma$ a.s.\ in the operator norm. 
  Moreover, if $\avgCov_n$ denotes the following averaged version of $\Sigma_n$:
  \begin{align*}
    \avgCov_n &= \frac{1}{\sum_{k=1}^n \frac{1}{h_k}} \sum_{k=1}^n \frac{1}{h_k}\Sigma_k  
    = \frac{1}{\sum_{k=1}^n \frac{1}{h_k}} \sum_{k=1}^n \ECond{ \xi_{k+1} \otimes \xi_{k+1}}{\CF_k}, 
  \end{align*}
  then
  \begin{equation}
    \sqrt{\avgCov_n} \xrightarrow[n\to \infty]{\text{H.-S.}} \sqrt{\covLimite}, \quad a.s. 
    \label{eq=cvgAvg}
  \end{equation}
  Finally, for any orthogonal projection operator $P$, 
  \begin{equation}
    \EE{\trace\left( \avgCov P\right)} \xrightarrow[n\to\infty]{} \EE{\trace\left(\covLimite P\right)}.
    \label{eq=cvgDesTraces}
  \end{equation}
\end{lem}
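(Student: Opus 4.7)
The plan is to establish almost sure Hilbert--Schmidt convergence of $\Sigma_n$ to $\Sigma$, deduce convergence of the square roots, pass to the Cesàro-weighted average, and obtain convergence of expected traces by uniform boundedness. First I would decompose $\Sigma_n$ explicitly. Writing $\xi_{n+1}=U_{n+1}-\Phi_{h_n}(Z_n)$ with $U_{n+1}=-\frac{Y_{n+1}-Z_n}{\nrm{Y_{n+1}-Z_n}}\frac{1}{h_n}K\bigl(\frac{X_{n+1}-x}{h_n}\bigr)$, and using that $Z_n$ is $\CF_n$-measurable while $(X_{n+1},Y_{n+1})$ is independent of $\CF_n$, one obtains
\[
\Sigma_n=h_n\,\EFn{U_{n+1}\otimes U_{n+1}}-h_n\,\Phi_{h_n}(Z_n)\otimes\Phi_{h_n}(Z_n).
\]
The second term has operator norm at most $h_n p_{\max}^2$ by \eqref{eq:PhiHBornee} and is asymptotically negligible, while the change of variable $u=(x'-x)/h_n$ turns the first term into the integral of $K^2(u)\,\psi(x+h_n u,Z_n)\,p(x+h_n u)$, where $\psi(x',z):=\ECond{(Y-z)\otimes(Y-z)/\nrm{Y-z}^2}{X=x'}$ is a nonnegative, trace-one operator of Hilbert--Schmidt norm at most~$1$.

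To prove $\Sigma_n\to\Sigma$ in Hilbert--Schmidt norm, I would control three perturbations uniformly for $u$ in the compact support of $K$: (i) $|p(x+h_n u)-p(x)|=O(h_n^\beta)$ by \assm{2}; (ii) $Z_n\to m$ almost surely by Theorem~\ref{thm:cvps}, and the map $z\mapsto(Y-z)\otimes(Y-z)/\nrm{Y-z}^2$ is continuous in $z$ with Hilbert--Schmidt norm bounded by $1$, so dominated convergence together with \assm{6} gives $\psi(x,Z_n)\to\psi(x,m)$; (iii) $\wass{\mu_{x+h_n u},\mu_x}\leq\cstWass(h_n|u|)^\beta$ by \assm{4}, which transfers to a bound on $\nrm{\psi(x+h_n u,z)-\psi(x,z)}$ via the decoupled inequality \eqref{eq=Thales2}, a Cauchy--Schwarz split, and the moment bound \assm{6}, exactly as in Section~\ref{sec=wasserstein}. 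Integrating against $K^2$ gives $\Sigma_n\to\Sigma$ almost surely in Hilbert--Schmidt norm, with $\Sigma$ as in \eqref{eq=defCovLimite}.

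From here, \eqref{eq=cvgSigmaN} follows because the square root map is continuous in Hilbert--Schmidt norm on nonnegative self-adjoint operators of uniformly bounded trace. For the averaged operator I would note that $\sum_{k=1}^n 1/h_k\to\infty$ under \eqref{eq=stepSize}, and since $\nrmHS{\Sigma_k-\Sigma}\to 0$ a.s.\ with $\nrmHS{\Sigma_k}\leq C$, a direct Toeplitz-lemma argument on the Hilbert--Schmidt norm gives $\avgCov_n\to\Sigma$ almost surely in Hilbert--Schmidt norm, whence \eqref{eq=cvgAvg} by another application of square-root continuity. Finally, for any orthogonal projection $P$, $0\leq\trace(\avgCov_n P)\leq\trace(\avgCov_n)$ is uniformly bounded, so the almost sure convergence $\trace(\avgCov_n P)\to\trace(\Sigma P)$ together with dominated convergence yields \eqref{eq=cvgDesTraces}.

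The main obstacle is step~(iii) above: the integrand $y\mapsto(y-z)\otimes(y-z)/\nrm{y-z}^2$ is not globally Lipschitz in $y$, so \assm{4} does not directly control $\psi(x',z)-\psi(x,z)$. The cure, mirroring the derivation of \assm{3} from \assm{4} and \assm{6} in Section~\ref{sec=wasserstein}, is to apply \eqref{eq=Thales2} under an optimal Wasserstein coupling of $\mu_{x'}$ and $\mu_x$ and split by Cauchy--Schwarz to isolate factors of the form $\nrm{Y-z}^{-2}$, which are uniformly integrable by~\assm{6}. Once this Hölder-in-$x$ control on $\psi$ is in hand, the remainder of the argument reduces to standard kernel asymptotics and elementary facts about trace-class operators.
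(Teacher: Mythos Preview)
Your approach is essentially the same as the paper's: you use the identical decomposition of $\Sigma_n$ (your pieces (i), (ii), (iii) and the $h_n\Phi_{h_n}\otimes\Phi_{h_n}$ term correspond to the paper's $D_3$, $D_1$, $D_2$, $D_4$), you invoke the same Thales-plus-Wasserstein-plus-\assm{6} argument for the $x$-regularity of $\psi$, and you pass to the averaged operator via Toeplitz and to the expected trace via dominated convergence. The only structural difference is that you work directly in Hilbert--Schmidt norm on $\Sigma_n$, whereas the paper first proves operator-norm convergence and then upgrades.

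There is, however, one genuine gap. Your passage to square roots relies on the claim that ``the square root map is continuous in Hilbert--Schmidt norm on nonnegative self-adjoint operators of uniformly bounded trace.'' This is false: take $\Sigma_n=\frac{1}{n}\sum_{k=1}^n e_k\otimes e_k$, so that $\nrmHS{\Sigma_n}=n^{-1/2}\to 0$ and $\trace(\Sigma_n)=1$, yet $\nrmHS{\sqrt{\Sigma_n}}=1$ for all $n$. What is missing is convergence of the traces, $\trace(\Sigma_n)\to\trace(\Sigma)$, which in your setting is immediate since $\trace\psi(x',z)=1$ identically and hence $\trace(\Sigma_n)=\int K^2(u)p(x+h_nu)\,du - h_n\nrm{\Phi_{h_n}(Z_n)}^2\to p(x)\int K^2=\trace(\Sigma)$. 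The paper uses exactly this: it combines operator-norm convergence of $\Sigma_n$ (and of $\avgCov_n$) with $\nrmHS{\sqrt{\Sigma_n}}^2=\trace(\Sigma_n)\to\trace(\Sigma)=\nrmHS{\sqrt{\Sigma}}^2$, and then appeals to the Radon--Riesz type argument in \cite{Bog98}, Example 3.8.15, to conclude \eqref{eq=cvgSigmaN} and \eqref{eq=cvgAvg}. Once you add this trace-convergence ingredient, your argument goes through and is equivalent to the paper's.
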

\begin{rem}
  Let us note that the convergence of square roots of covariance operators
  is equivalent to the convergence of the centered Gaussian laws with these
  covariances; see e.g. \citep{Bog98}, Example 3.8.13. 
\end{rem}
\begin{proof}
  We first show that the convergence \eqref{eq=cvgSigmaN} holds in operator norm. 
  Recall that $D(x,y)$ denotes the unit vector $(y-x)/\nrm{y-x}$. Let us rewrite
  $\covLimite_n$. 
\begin{equation*}
\covLimite_n = 
\frac{1}{h_n}\EFn{K^{2}\left(\frac{X_{n+1}-x}{h_n}\right) D(Z_n,Y_{n+1}) \otimes D(Z_n,Y_{n+1})} 
  - h_n\Phi_{h_n}(Z_n)\otimes\Phi_{h_n}(Z_n).
\end{equation*}
  Denote by $(X,Y)$ a couple of random variables with the original joint law, 
  and $Y_x$ be a random variable with law $\mu_x$, independent from $(X,Y)$. 
  
  We decompose the difference $\Sigma_n - \Sigma = D_1 + D_2 + D_3 + D_4$ where
  \begin{align*}
    D_1 &=  
\frac{1}{h_n}\EFn{K^{2}\left(\frac{X_{n+1}-x}{h_n}\right) D(Z_n,Y_{n+1}) \otimes D(Z_n,Y_{n+1})}
\\&\qquad
- \frac{1}{h_n}\EE{K^{2}\left(\frac{X-x}{h_n}\right) D(m,Y) \otimes D(m,Y)}  \\
    D_2 &= 
\frac{1}{h_n}\EE{K^{2}\left(\frac{X - x}{h_n}\right) D(m,Y) \otimes D(m,Y)}
- \frac{1}{h_n}\EE{K^{2}\left(\frac{X-x}{h_n}\right) D(m,Y_x) \otimes D(m,Y_x)}  \\
    D_3 &= 
 \frac{1}{h_n}\EE{K^{2}\left(\frac{X-x}{h_n}\right) D(m,Y_x) \otimes D(m,Y_x)} 
 - \covLimite \\
   D_4 &=  -h_n\Phi_{h_n}(Z_n)\otimes\Phi_{h_n}(Z_n).
  \end{align*}
  Note that only the 
  first and the last terms are random; the others depend on $n$ only through the quantity $h_n$. 
  For $(a,b,c)\in H^3$, it is easy to see that:
  \begin{align*}
    \nrmOp{D(a,b)\otimes D(a,b) - D(a,c) \otimes D(a,c)}
    &\leq  (\nrm{D(a,b)} + \nrm{D(a,c)}) \nrm{D(a,b) - D(a,c)} \\
    & \leq 2\left( \frac{1}{\nrm{a-b}} + \frac{1}{\nrm{a-c}}\right) \nrm{b-c},
  \end{align*}
  where we used \eqref{eq=Thales2} in the last line. 
  Therefore:
  \[
  \nrmOp{D_1} \leq 
  \frac{2}{h_n} \EE{K^2\left(\frac{X - x}{h_n} \right)
  \left( \frac{1}{\nrm{Y-m}} + \frac{1}{\nrm{Y - Z_n}}\right)} \nrm{Z_n - m}.
  \]
  Conditioning on $X$ and using Assumption \assm{6}, we get:
  \[
  \nrmOp{D_1} \leq 4\sqrt{\cstMoment} \frac{1}{h_n} \EE{ K^2( (X - x)/h_n)} \nrm{Z_n - m}. 
  \]
  The boundedness of $p$ and the finiteness of $v^2 = \int K^2(u)du$ ensure
  \begin{equation}
    \label{eq=cvgNoyau}
  \frac{1}{h_n} \EE{ K^2( (X - x)/h_n ) } \cv p(x) v^2, 
\end{equation}
  by dominated convergence; therefore the sequence $(h_n)^{-1} \EE{ K^2( (X - x)/h_n)}$ 
  is bounded. 
  Since $Z_n$ converges a.s.\ to $m$, $\nrmOp{D_1}$ converges a.s.\ to zero. 

  The second term $D_2$ is treated similarly; we get:
  \[
  \nrmOp{D_2} \leq 
  \frac{2}{h_n} \EE{K^2\left(\frac{X - x}{h_n} \right)
  \left( \frac{1}{\nrm{Y-m}} + \frac{1}{\nrm{Y_x - m }}\right) \nrm{Y - Y_x}}.
  \]
   Recall that $\mu_x = \law(Y|X = x)$ and
  let $\mu_{x,x'}$ be a coupling of $\mu_x$ and $\mu_{x'}$ that
  achieves the Wasserstein distance. We condition on the value of $X$ and we apply Hölder's inequality in order to 
  bound the first integral with Assumption \assm{6}
  and the second one with Assumption \assm{4}: 
  \begin{align*}
  \nrmOp{D_2} &\leq 
  \frac{2}{h_n} \EE{K^2\left(\frac{X - x}{h_n} \right)
  \left( \int \frac{1}{\nrm{y-m}^2} d\mu_X(y)\right)^{1/2}
  \left( \int \nrm{y-y'}^2 d\mu_{x,X} (y,y') \right)^{1/2}}
  \\&\quad
  + \frac{2}{h_n} \EE{K^2\left(\frac{X - x}{h_n} \right)
  \left( \int \frac{1}{\nrm{y-m}^2} \mu_x(dy)\right)^{1/2}
  \left( \int \nrm{y-y'}^2 d\mu_{x,X} (y,y') \right)^{1/2}} \\
  &\leq \frac{4\sqrt{\cstMoment}}{h_n} 
  \EE{K^2\left(\frac{X - x}{h_n} \right)
 \wass{ \mu_X, \mu_x}} \\
  &\leq \frac{4\cstWass\sqrt{\cstMoment}}{h_n} 
  \EE{K^2\left(\frac{X - x}{h_n} \right)
 \abs{X -x}^\beta} \\
 & = 4\cstWass \sqrt{\cstMoment}h_n^\beta \int K^2(y)\abs{y}^\beta dy =  \mathcal{O}(h_n^\beta) \cv 0. 
\end{align*}

  In the third term $D_3$, since $Y_x$ is independent of $X$ we may write
  \[
  D_3 = \left(\frac{1}{h_n} \EE{K^2( (X - x)/h_n)}  - p(x)v^2\right) \covLimite. 
  \]
  Thanks to \eqref{eq=cvgNoyau}, this converges to zero. Finally,
  by Proposition~\ref{prp=PhiEtPhiH}, $\Phi_{h_n}(Z_n)$ is almost surely bounded,   and since $\nrmOp{a\otimes b} \leq \nrm{a}\nrm{b}$, 
\[
\nrmOp{h_n\Phi_{h_n}(Z_n)\otimes\Phi_{h_n}(Z_n)}\leq h_n \nrm{\Phi_{h_n}(Z_n)}^2  \cv 0, \quad a.s.
\]
Therefore, $\covLimite_n$ converges to $\covLimite$ in the operator norm. 

To prove the convergence of $\avgCov_n$ in operator norm, observe that
\begin{align*}
  \nrmOp{\avgCov_n - \covLimite} &\leq \frac{1}{\sum \frac{1}{h_k}} \sum \frac{1}{h_k}\nrmOp{\covLimite_k - \covLimite}.
\end{align*}
Since $\nrmOp{\covLimite_k - \covLimite}$ converges to zero, the conclusion follows by the Toeplitz lemma. 

Let us show that these convergences hold in the Hilbert--Schmidt norm. 
For any $a$, $\trace(a\otimes a) = \nrm{a}$. Therefore:
  \begin{align*}
    \nrm{\sqrt{\covLimite_n}}_{H.S.}^2 &= \trace(\covLimite_n) \\
    &= \frac{1}{h_n} \EE{ K^2( (X - x)/h_n)} - h_n \nrm{\Phi_{h_n}(Z_n)}^2 \\
    & \cv  p(x) v^2  = \trace(\covLimite) = \nrmHS{\sqrt{\covLimite}}^2. 
  \end{align*}
Another application of the Toeplitz lemma shows that
  \begin{align*}  
  \nrm{\sqrt{\avgCov_n}}_{H.S.}^2
  = \trace(\avgCov_n) = \frac{1}{\sum_{k=1}^n \frac{1}{h_k}}\sum_{k=1}^n \trace\left(\frac{1}{h_k} \covLimite_k\right) 
  \xrightarrow[n\to\infty]{} \trace(\covLimite)  = \nrmHS{\sqrt{\covLimite}}^2. 
\end{align*}
  By the same reasoning as in Example 3.8.15 of \citep{Bog98}, 
  this implies the H.-S.\ convergences \eqref{eq=cvgSigmaN}
  and \eqref{eq=cvgAvg}. 

  Finally, let $P$ be an orthogonal projection operator. Choose a basis $(e_i)_{i\in \xN}$ of orthonormal eigenvectors of $P$: 
  $Pe_i = 0$ or $Pe_i = e_i$.  Since $\avgCov_n$ is trace-class, so is $\avgCov_n P$ and:
  \begin{align*}
    \trace\left( \avgCov_n P \right) &= \sum_i \scal{ e_i, \avgCov_n Pe_i} 
    = \sum_i \scal{Pe_i, \avgCov_n Pe_i} \\
    &= \sum_i \scal{\sqrt{\avgCov_n} P e_i, \sqrt{\avgCov_n} P e_i} 
    = \sum_i \nrm{\sqrt{\avgCov_n}P e_i}^2 \\
    &= \nrmHS{\sqrt{\avgCov_n} P}^2 \\
    &\xrightarrow[n\to \infty]{} \nrmHS{\covLimite P}^2 = \trace(\covLimite P). 
  \end{align*}
  This convergence is almost sure. Since $\trace(\covLimite_k) \leq \frac{1}{h_k} \EE{K^2( (X - x)/h_k)} \leq C$, 
  the convergence also holds in $L^1$ by dominated convergence, and \eqref{eq=cvgDesTraces} holds. 
\end{proof}

\paragraph{Step 1 --- The CLT for the martingale.} 
To prove the CLT \eqref{eq=martingaleCLT}, let us  check that the assumptions of Theorem 5.1 in \citep{Jak88} are fulfilled. Reminding (\ref{equiv}), translated in our context, these assumptions are:
\begin{equation}
\label{5-1-Jaku}
\forall \eta >0 \quad \lim_{n \to \infty} \PP\left(\sup_{1 \leq k \leq n} \sqrt{\frac{h_n}{n}}\nrm{\xi_{k+1}}>\eta \right)=0,
\end{equation}
\begin{equation}
\label{5-2-Jaku}
\mbox{a.s.} \quad \lim_{n \to \infty}\frac{1}{\sum_{k=1}^n\frac{1}{h_k}}\sum_{k=1}^n\scal{\xi_{k+1},e_i}\scal{\xi_{k+1},e_j} =\psi_{i,j}, 
\end{equation}
\begin{equation}
\label{5-3-Jaku}
\forall \varepsilon>0 \quad \lim_{N \to \infty}\limsup_{n \to \infty}\PP\left(\frac{h_n}{n}\sum_{i=1}^n\sum_{j=N}^{\infty}\scal{\xi_{i+1},e_j}^2>\varepsilon\right)=0,
\end{equation}
where $(e_n)_{n\in \xN}$ is an orthonormal basis of $H$ and $\psi_{i,j}:=\scal{\Sigma e_i,e_j}$.

We deal with condition  \eqref{5-1-Jaku} by applying Markov's inequality. Let $\eta>0$.
\begin{align*}
\PP\left(\sup_{1 \leq k \leq n} \sqrt{\frac{h_n}{n}}\nrm{\xi_{k+1}}>\eta \right)
&\leq  \sum_{k=1}^n\PP\left(\sqrt{\frac{h_n}{n}}\nrm{\xi_{k+1}}>\eta \right) \\
& \leq  \frac{1}{\eta^p} \sum_{k=1}^n\E\left[\left(\frac{h_n}{n}\right)^{p/2}\nrm{\xi_{k+1}}^p\right],
\end{align*}
for any $p\geq 1$. We chose an integer $p$ such that $p>2$. By convexity of the function $x \mapsto x^p$, we have, for any $n$,
\begin{equation*}
\nrm{\xi_{n+1}}^p \leq 2^{p-1}\left(\frac{1}{h_n^p}K^p\left(\frac{X_{n+1}-x}{h_n}\right)+\nrm{\Phi_{h_n}(Z_n)}^p\right).
\end{equation*}
Thus an easy computation yields
\begin{equation*}
\E\left[\nrm{\xi_{n+1}}^p\right] \leq \frac{2^{p-1}p_{\max}\int_{\xR}K^p(z)dz}{h_n^{p-1}}+2^{p-1}\E\left[\nrm{\Phi_{h_n}(Z_n)}^p\right].
\end{equation*}
In the last term, $\Phi_{h_n}(Z_n)$ is bounded, thanks to \eqref{eq:PhiHBornee}. 
Consequently, there exists a constant $C(p)$ (independent of $n$) such that 
\begin{equation*}
\E\left[\nrm{\xi_{n+1}}^p\right]\leq \frac{C(p)}{h_n^{p-1}}.
\end{equation*}
Hence we have, for a constant $C'(p)$ independent of $n$,
\begin{equation*}
\PP\left(\sup_{1 \leq k \leq n} \sqrt{\frac{h_n}{n}}\nrm{\xi_{k+1}}>\eta \right)  \leq  \frac{C(p)h_n^{p/2}}{n^{p/2}\eta^p} \sum_{k=1}^nh_k^{-p+1} \leq \frac{C'(p)}{n^{p/2-h(p/2-1)-1}}.
\end{equation*}
Since $p>2$, one has $p/2-h(p/2-1)-1>0$ and thus  \eqref{5-1-Jaku} holds.

\medskip 

Condition \eqref{5-2-Jaku} is a consequence of the law of large
numbers for martingales. 
Let us consider $(e_n)_{n \in \xN}$ an orthonormal basis of $H$. From the decomposition
\[
  \scal{\xi_{n+1},e_i}\scal{\xi_{n+1},e_j}
  = \EFn{\scal{\xi_{n+1},e_i}\scal{\xi_{n+1},e_j}}+\varepsilon_{n+1},
\]
with $\varepsilon_{n+1}:=\scal{\xi_{n+1},e_i}\scal{\xi_{n+1},e_j}-\EFn{\scal{\xi_{n+1},e_i}\scal{\xi_{n+1},e_j}}$, we have
\begin{align*}
\frac{1}{\sum_{k=1}^n\frac{1}{h_k}}\sum_{k=1}^n\scal{\xi_{k+1},e_i}\scal{\xi_{k+1},e_j}
&= \frac{1}{\sum_{k=1}^n\frac{1}{h_k}}\sum_{k=1}^n\E\left[\scal{\xi_{k+1},e_i}\scal{\xi_{k+1},e_j}|\CF_k\right]
  + \frac{1}{\sum_{k=1}^n\frac{1}{h_k}}\sum_{k=1}^n\ep_{k+1} \\
  &= \scal{e_i,\avgCov_n e_j} 
  + \frac{1}{\sum_{k=1}^n\frac{1}{h_k}}\sum_{k=1}^n\ep_{k+1}.
\end{align*}
By Lemma \ref{lem:cvgDesCov}, the matrix element $\scal{e_i,\avgCov_n e_j}$ 
converges to $\psi_{i,j}$. 
The law of large numbers for the martingale $\left(\sum_{k=1}^n\ep_{k+1}\right)_{n \in \xN}$ whose increasing process is of order $n^{1+3h}$ yields
\[
  \lim_{n \to \infty} \frac{1}{\sum_{k=1}^n\frac{1}{h_k}}\sum_{k=1}^n\ep_{k+1}=0 \quad \mbox{a.s.,} 
\]
since $h<1$, and condition \eqref{5-2-Jaku} is satisfied. 

\medskip

It remains to check condition \eqref{5-3-Jaku}.
Let $\varepsilon>0$. Applying Markov's inequality, we have
\begin{align*}
  \PP\left(\frac{h_n}{n}\sum_{k=1}^n\sum_{j=N}^{\infty}\scal{\xi_{k+1},e_j}^2>\varepsilon\right)
  &\leq \frac{h_n}{n\varepsilon}\sum_{k=1}^n\sum_{j=N}^{\infty}\E\left[\scal{\xi_{k+1},e_j}^2\right] \\
  &\leq \frac{h_n}{n\varepsilon} \frac{\sum \frac{1}{h_k}}{\sum \frac{1}{h_k}}
  \sum_{k=1}^n\sum_{j=N}^{\infty}\EE{\ECond{\scal{\xi_{k+1},e_j}^2}{\CF_k}} \\
  & \leq \frac{h_n}{n\varepsilon} \left(\sum_{k=1}^n \frac{1}{h_k}\right)
  \EE{\sum_{j=N}^\infty \scal{e_j, \avgCov_n e_j}}.
\end{align*}
Call $P_N$ the orthogonal projection on the $e_i$, $i\geq N$. 
\begin{align*}
  \PP\left(\frac{h_n}{n}\sum_{k=1}^n\sum_{j=N}^{\infty}\scal{\xi_{k+1},e_j}^2>\varepsilon\right)
  & \leq \frac{h_n}{n\varepsilon} \left(\sum_{k=1}^n \frac{1}{h_k}\right)
  \EE{\trace(\avgCov_n P_N)}.
\end{align*}
Therefore
\begin{align*}
 \limsup_n \PP\left(\frac{h_n}{n}\sum_{k=1}^n\sum_{j=N}^{\infty}\scal{\xi_{k+1},e_j}^2>\varepsilon\right)
 & \leq \frac{1}{(1+h) \varepsilon}
  \EE{\trace(\covLimite P_N)}, 
\end{align*}
and \eqref{5-3-Jaku} follows.

\paragraph{Step 2 --- The remaining terms are negligible.} 
Now, it remains to prove that all the other terms in (\ref{dec-cle-clt}) converge in probability to zero. Due to the equivalence (\ref{equiv}), we have to prove the convergence in probability to zero of
\[\sqrt{\frac{h_n}{n}}\left(
\frac{T_1\sqrt{h_1}}{\gamma_{1}}
- A_n + A'_n- A''_n
\right).\]
Recall that $\EE{\ind{\Omega_N} \nrm{T_n}^2} \leq C_N \frac{\ln(n)}{n^\gamma h_n},$ thanks to Proposition~\ref{prp=vitesseQuadratique}. 
For the first term $A_n = \frac{T_{n+1}}{\gamma_n}$, we have:
\begin{align*}
  \EE{\ind{\Omega_N}\frac{h_n}{n} \nrm{A_n}^2}
  &\leq \frac{C'_N \ln(n)}{n^{1-\gamma}},
\end{align*}
therefore $\sqrt{\frac{h_n}{n}}.A_n \cvp 0$. Let us turn to the second term $A'_n=\sum_{k=2}^n T_k\left[
    \frac{1}{\gamma_k}-\frac{1}{\gamma_{k+1}}
  \right]$. Since there exists a constant $C$ such that
\[ \abs{\frac{1}{\gamma_k} - \frac{1}{\gamma_{k+1}}}\leq C k^{\gamma - 1}, \]
by applying Jensen's inequality together with Proposition~\ref{prp=vitesseQuadratique}, there is  a positive constant $C$ such that
\begin{align*}
  \EE{\sqrt{\frac{h_n}{n}}\nrm{A'_n}\ind{\Omega_N}}
  &\leq C\sqrt{\ln(n)} n^{\gamma/2 - 1/2}.
\end{align*}
 Therefore $\sqrt{\frac{h_n}{n}}.A'_n\cvp 0$  since $\gamma < 1$. 

Finally, for the last term $A''_n=\sum_{k=1}^n\left(\delta_k+D_{h_k}(Z_k)\right)$, since on $\Omega_N$, $\nrm{\delta_k} \leq C_r \nrm{Z_k - m}^2$, we have for the part in $\delta_k$:
\begin{align*}
  \EE{\ind{\Omega_N} \nrm{\sqrt{\frac{h_n}{n}}\sum_{k=1}^n\delta_k}} 
  &\leq  C \ln(n)n^{h/2-\gamma+1/2},
\end{align*}
For the additional term, due to \eqref{eq=Phi}, we have $\EE{\ind{\Omega_N} \nrm{D_{h_k}(Z_k)}}\leq C h_k^{\beta}$ so that for some positive constant $C$,
\begin{align*}
  \EE{\ind{\Omega_N}\sqrt{\frac{h_n}{n}} \nrm{A''_n}} 
  &\leq C n^{1/2-h/2-h\beta}.
\end{align*}
The end of the proof follows the same guidelines as in \cite{CCZ11}. 

\bigskip

\noindent \textbf{Acknowledgements.} We thank the company M\'ediam\'etrie for allowing us to illustrate our methodologies with their data.

\bibliographystyle{apalike}
\bibliography{biblio_mediane}
\end{document}